\def\eq#1{{\rm(\ref{#1})}}
\theoremstyle{plain}
\newtheorem{thm}{Theorem}[section]
\newtheorem{prop}[thm]{Proposition}
\newtheorem{lem}[thm]{Lemma}
\newtheorem{quest}[thm]{Question}
\theoremstyle{definition}
\newtheorem{dfn}[thm]{Definition}
\newtheorem{ex}[thm]{Example}
\newtheorem{rem}[thm]{Remark}
\def\Re{\mathop{\rm Re}}
\def\Ric{\mathop{\rm Ric}}
\def\Ker{\mathop{\rm Ker}}
\def\U{\mathbin{\rm U}}
\def\u{\mathbin{\mathfrak u}}
\def\Vol{\mathop{\rm Vol}\nolimits}
\def\Exp{\mathop{\rm Exp}\nolimits}
\def\id{\mathop{\rm id}}
\def\ge{\geqslant}
\def\le{\leqslant}
\def\R{{\mathbin{\mathbb R}}}
\def\Z{{\mathbin{\mathbb Z}}}
\def\C{{\mathbin{\mathbb C}}}
\def\g{{\mathbin{\mathfrak g}}}
\def\ovB{\,\overline{\!B}}
\def\al{\alpha}
\def\be{\beta}
\def\na{\nabla}
\def\ga{\gamma}
\def\de{\delta}
\def\io{\iota}
\def\ep{\epsilon}
\def\ze{\zeta}
\def\la{\lambda}
\def\om{\omega}
\def\vp{\varphi}
\def\th{\theta}
\def\up{\upsilon}
\def\Ga{\Gamma}
\def\De{\Delta}
\def\Om{\Omega}
\def\Up{\Upsilon}
\def\pd{\partial}
\def\ts{\textstyle}
\def\w{\wedge}
\def\iy{\infty}
\def\lt{\ltimes}
\def\ra{\rightarrow}
\def\longra{\longrightarrow}
\def\t{\times}
\def\ci{\circ}
\def\ti{\tilde}
\def\d{{\rm d}}
\def\f{\frac}
\def\ot{\otimes}
\def\op{\oplus}
\def\ha{{\ts\frac{1}{2}}}
\def\bs{\boldsymbol}
\def\md#1{\vert #1 \vert}
\def\bmd#1{\big\vert #1 \big\vert}
\def\nm#1{\Vert #1 \Vert}
\def\bnm#1{\big\Vert #1 \big\Vert}
\def\ms#1{\vert #1 \vert^2}
\def\an#1{\langle #1 \rangle}
\def\ban#1{\bigl\langle #1 \bigr\rangle}
\begin{document}
\title{On the existence of Hamiltonian stationary Lagrangian
submanifolds in symplectic manifolds}
\author{Dominic Joyce, Yng-Ing Lee and Richard Schoen}
\date{}
\maketitle
\begin{abstract}
Let $(M,\om)$ be a compact symplectic $2n$-manifold, and $g$ a
Riemannian metric on $M$ compatible with $\om$. For instance, $g$
could be K\"ahler, with K\"ahler form $\om$. Consider compact
Lagrangian submanifolds $L$ of $M$. We call $L$ {\it Hamiltonian
stationary}, or {\it H-minimal}, if it is a critical point of the
volume functional $\Vol_g$ under Hamiltonian deformations, computing
$\Vol_g(L)$ using $g\vert_L$. It is called {\it Hamiltonian
stable\/} if in addition the second variation of $\Vol_g$ under
Hamiltonian deformations is nonnegative.

Our main result is that if $L$ is a compact, Hamiltonian stationary
Lagrangian in $\C^n$ which is {\it Hamiltonian rigid}, then for any
$M,\om,g$ as above there exist compact Hamiltonian stationary
Lagrangians $L'$ in $M$ contained in a small ball about some $p\in
M$ and locally modelled on $t L$ for small $t>0$, identifying $M$
near $p$ with $\C^n$ near $0$. If $L$ is Hamiltonian stable, we can
take $L'$ to be Hamiltonian stable.

Applying this to known examples $L$ in $\C^n$ shows that there exist
families of Hamiltonian stable, Hamiltonian stationary Lagrangians
diffeomorphic to $T^n$, and to $({\cal S}^1\t{\cal S}^{n-1})/\Z_2$,
and with other topologies, in every compact symplectic $2n$-manifold
$(M,\om)$ with compatible metric~$g$.
\end{abstract}

\section{Introduction}
\label{hs1}

Let $(M,\om)$ be a symplectic manifold of real dimension $2n$, and
$g$ a Riemannian metric on $M$ compatible with $\om$, and $J$ the
associated almost complex structure, so that $\om(v,w)=g(Jv,w)$ for
vector fields $v,w$ on $M$. For example, $J$ could be an integrable
complex structure, $g$ a K\"ahler metric on $(M,J)$, and $\om$ the
K\"ahler form. This paper concerns some special classes of compact
Lagrangian submanifolds $L$ in $(M,\om)$:
\begin{itemize}
\setlength{\itemsep}{0pt}
\setlength{\parsep}{0pt}
\item We call $L$ {\it Hamiltonian stationary\/} (or {\it
H-minimal\/}) if $L$ has stationary volume amongst Hamiltonian
equivalent Lagrangians $L'$. The Euler--Lagrange equation for
Hamiltonian stationary Lagrangians is $\d^*\al_H=0$, where $H$ is
the mean curvature vector on $L$, $\al_H$ the 1-form on $L$ defined
by $\al_H(\cdot)=\om(H,\cdot)$, and $\d^*$ the Hodge dual of the
exterior derivative~$\d$.
\item If $L$ is Hamiltonian stationary, we call $L$ ({\it
Hamiltonian\/}) {\it stable\/} if the second variation of volume at
$L$ amongst Hamiltonian equivalent Lagrangians $L'$ is nonnegative.
\end{itemize}
Now let $M$ be $\C^n$ with its Euclidean K\"ahler structure
$J_0,g_0,\om_0$, and $L$ be a compact Lagrangian in $\C^n$. Then
\begin{itemize}
\setlength{\itemsep}{0pt}
\setlength{\parsep}{0pt}
\item If $L$ is Hamiltonian stationary in $\C^n$, we call $L$ ({\it
Hamiltonian\/}) {\it rigid\/} if all infinitesimal Hamiltonian
deformations of $L$ as a Hamiltonian stationary Lagrangian are
induced by the action of the Lie algebra $\u(n)\op\C^n$ of the
automorphism group $\U(n)\lt\C^n$ of~$(\C^n,J_0,g_0)$.
\end{itemize}
More details are given in \S\ref{hs2}. Hamiltonian stationary
Lagrangians were defined and studied by Oh \cite{Oh1,Oh2}, in the
K\"ahler case. In \cite[Th.~IV]{Oh2} he proves that for
$a_1,\ldots,a_n>0$, the torus $T^n_{a_1,\ldots,a_n}$ in $\C^n$ given
by
\begin{equation}
T^n_{a_1,\ldots,a_n}=\bigl\{(z_1,\ldots,z_n)\in\C^n:\md{z_j}=a_j, \;
j=1,\ldots,n\bigr\}
\label{hs1eq1}
\end{equation}
is a stable, rigid, Hamiltonian stationary Lagrangian in $\C^n$. In
Example \ref{hs2ex2} we show that the Lagrangian $L_n$ diffeomorphic
to $({\cal S}^1\t{\cal S}^{n-1})/\Z_2$ given by
\begin{equation}
\ts L_n=\bigl\{(x_1 e^{is},\ldots,x_n e^{is}): 0\leq s <
\pi,\;\sum_{j=1}^n x_j^2=1,\; (x_1, \ldots, x_n)\in \R^n \bigr\},
\label{hs1eq2}
\end{equation}
is a stable, rigid, Hamiltonian stationary Lagrangian in $\C^n$, and
Example \ref{hs2ex4} gives more examples due to Amarzaya and
Ohnita~\cite{AmOh}.

Hamiltonian stationary Lagrangians are interesting as they can be
viewed as the `best' representatives of a Hamiltonian isotopy class
of Lagrangians, and therefore studying Hamiltonian stationary
Lagrangians may give us some understanding of the family of all
Lagrangians (see \S\ref{hs7} on this point). Also, for compact,
nonsingular, graded Lagrangians in a Calabi--Yau manifold, to be
Hamiltonian stationary is equivalent to being special Lagrangian.
Thus, Hamiltonian stationary Lagrangians are a generalization of
special Lagrangians.

The goal of this paper is to prove the following:
\medskip

\noindent{\bf Theorem A.} {\it Suppose\/ $(M,\om)$ is a compact
symplectic\/ $2n$-manifold, $g$ a Riemannian metric on $M$
compatible with\/ $\om,$ and\/ $L$ is a compact, Hamiltonian rigid,
Hamiltonian stationary Lagrangian in\/ $\C^n$. Then there exist
compact, Hamiltonian stationary Lagrangians\/ $L'$ in $M$ which are
diffeomorphic to\/ $L,$ such that\/ $L'$ is contained in a small
ball about some point $p\in M,$ and identifying $M$ near\/ $p$
with\/ $\C^n$ near\/ $0$ in geodesic normal coordinates, $L'$ is a
small deformation of\/ $t L$ for small\/ $t>0$. If\/ $L$ is also
Hamiltonian stable, we can take\/ $L'$ to be Hamiltonian stable.}
\medskip

Applying Theorem A with $L=T^n_{a_1,\ldots,a_n}$ or $L_n$ yields:
\medskip

\noindent{\bf Corollary B.} {\it Suppose\/ $(M,\om)$ is a compact
symplectic\/ $2n$-manifold, and\/ $g$ a Riemannian metric on $M$
compatible with\/ $\om$. Then there exist stable, Hamiltonian
stationary Lagrangians\/ $L'$ in\/ $M$ diffeomorphic to\/ $T^n$ and
to\/ $({\cal S}^1\t{\cal S}^{n-1})/\Z_2,$ which are locally modelled
on $tT^n_{a_1,\ldots,a_n}$ in\/ \eq{hs1eq1} or on $tL_n$ in\/
\eq{hs1eq2} for small\/ $t>0$ near some point\/ $p\in M,$
identifying $M$ near\/ $p$ with\/ $\C^n$ near\/~$0$.}
\medskip

We note that a special case of Corollary B has been proved
independently by Butscher and Corvino \cite{BuCo}, using a similar
method. They fix $n=2$, suppose $(M,J,g)$ is K\"ahler, take
$L=T^2_{a_1,a_2}$, and also assume a nondegeneracy condition on the
metric $g$ near the point $p$ which we do not need. One difference
between our approach and theirs is that they fix the point $p\in M$
where they glue in $L$ in advance and make assumptions about it,
whereas we show we can glue in $L$ near $p$ for some unknown
point~$p\in M$. A different nondegeneracy condition for K\"ahler
manifolds of any dimension and $L=T^n_{a_1,\ldots,a_n}$ is obtained
by the second author recently \cite{Lee}. Butscher and Corvino
believe that their method can also be generalized to higher
dimensions.

Another approach to the construction of Hamiltonian stationary
Lagrangians in general K\"ahler and symplectic manifolds is the
variational approach introduced by Schoen and Wolfson \cite{ScWo}.
The idea there is to minimize volume among Lagrangian cycles
representing a given homology (or homotopy) class to produce a
minimizer in a class of singular Lagrangian submanifolds. One then
hopes to study the regularity properties of these minimizing cycles.
This minimization can be done in general dimensions in the class of
Lagrangian integral currents, but the regularity theory is still
missing in general. For the two dimensional problem, one can
minimize in the class of surfaces which are images of a fixed
surface (under $W^{1,2}$ maps), and the paper \cite{ScWo} develops
the existence and regularity theory for this problem. It is shown
that such minimizers are smooth branched Hamiltonian stationary
surfaces outside a finite number of singular points at which the
possible tangent cones can be described.

We begin in \S\ref{hs2} with some background material from
symplectic geometry, the definition of Hamiltonian stationary,
Hamiltonian stable, and Hamiltonian rigid Lagrangians, and examples
of stable and rigid Hamiltonian stationary Lagrangians in $\C^n$.
Given a compact symplectic manifold $(M,\om)$ with compatible metric
Riemannian $g$, \S\ref{hs3} constructs a smooth family of Darboux
coordinate systems $\Up_{p,\up}:B_\ep\ra M$ for each $p\in M$ such
that the metric $\Up_{p,\up}^*(g)$ on $B_\ep\subset\C^n$ is close to
Euclidean metric $g_0$ on $B_\ep$ in $C^k$ for all $k\ge 0$,
uniformly in~$p\in M$.

Section \ref{hs4} sets up the notation for the proof of Theorem A,
recasting it as solving one of a family of fourth-order nonlinear
elliptic p.d.e.s $P^t_{p,\up}(f)=0$ on $L'$, for small $t>0$ and
$(p,\up)$ in the $\U(n)$-frame bundle $U$ of $M$, where for small
$t,f$ the equation $P^t_{p,\up}(f)=0$ approximates a fourth-order
linear elliptic equation ${\cal L}f=0$. Section \ref{hs5} proves
that we can solve this equation mod $\Ker{\cal L}$ for all $(p,\up)$
in $U$, and there is a unique solution $f^t_{p,\up}$ which is
$L^2$-orthogonal to $\Ker{\cal L}$ and small in $C^{4,\ga}$. Section
\ref{hs6} shows that $P^t_{p,\up}(f^t_{p,\up})=0$ if and only if
$(p,\up)$ is a stationary point of a smooth function $K^t$ on the
compact manifold $U$, and deduces Theorem A. Finally, \S\ref{hs7}
speculates on the possibility of defining invariants `counting'
Hamiltonian stationary Lagrangians in a fixed Hamiltonian isotopy
class in~$(M,\om)$.
\medskip

\noindent {\bf Acknowledgements:} Much progress on this project was
made while Yng-Ing visited Dominic in Oxford in July 2008. Yng-Ing
and Dominic would like to thank the Mathematical Institute, Oxford
University, and the Taida Institute for Mathematical Sciences,
National Taiwan University, for support which made this visit
possible. Yng-Ing also thanks Yoshihiro Ohnita for telling her of
his work with Amarzaya \cite{AmOh} on examples of stable and rigid
Hamiltonian stationary Lagrangians in~$\C^n$.

\section{Background material}
\label{hs2}

\subsection{Background from symplectic geometry}
\label{hs21}

We start by recalling some elementary symplectic geometry, which can
be found in McDuff and Salamon \cite{McSa}. Here are the basic
definitions.

\begin{dfn} Let $M$ be a smooth manifold of even dimension $2n$.
A closed $2$-form $\om$ on $M$ is called a {\it symplectic form} if
the $2n$-form $\om^n$ is nonzero at every point of $M$. Then
$(M,\om)$ is called a {\it symplectic manifold}. A submanifold $L$
in $M$ is called {\it Lagrangian} if $\dim L=n=\ha\dim M$
and~$\om\vert_L\equiv 0$.
\label{hs2def1}
\end{dfn}

The simplest example of a symplectic manifold is~$\C^n$.

\begin{ex} Let $\C^n$ have complex coordinates $(z_1,\ldots,z_n)$,
where $z_j=x_j+iy_j$ with $i=\sqrt{-1}$. Define the standard
Euclidean metric $g_0$, symplectic form $\om_0$, and complex
structure $J_0$ on $\C^n$ by
\begin{equation}
\begin{split}
g_0&=\ts\sum_{j=1}^n\ms{\d z_j}=\sum_{j=1}^n(\d x_j^2+\d y_j^2),\\
\om_0&=\ts\frac{i}{2}\sum_{j=1}^n\d z_j\w\d\bar{z}_j=\sum_{j=1}^n\d
x_j\w\d y_j,\quad\text{and}\\
J_0&=\ts\sum_{j=1}^n\bigl(i\d z_j \ot \frac{\pd}{\pd
z_j}-i\d\bar{z}_j\ot \frac{\pd}{\pd\bar{z}_j}\bigr)
=\sum_{j=1}^n\bigl(\d x_j\ot\frac{\pd}{\pd y_j}- \d
y_j\ot\frac{\pd}{\pd x_j}\bigr),
\end{split}
\label{hs2eq1}
\end{equation}
noting that $\d z_j=\d x_j+i\d y_j$ and $\frac{\pd}{\pd
z_j}=\ha\bigl(\frac{\pd}{\pd x_j}-i\frac{\pd}{\pd y_j}\bigr)$. Then
$(\C^n,\om_0)$ is a symplectic manifold, and $g_0$ is a K\"ahler
metric on $(\C^{2n},J)$ with K\"ahler form $\om_0$. For $\ep>0$,
write $B_\ep$ for the open ball of radius $\ep$ about 0 in~$\C^n$.
\label{hs2ex1}
\end{ex}

{\it Darboux' Theorem} \cite[Th.~3.15]{McSa} says that every
symplectic manifold is locally isomorphic to $(\C^n,\om_0)$.

\begin{thm} Let\/ $(M,\om)$ be a symplectic $2n$-manifold and\/
$p\in M$. Then there exist\/ $\ep>0$ and an embedding $\Up:B_\ep\ra
M$ with\/ $\Up(0)=p$ such that\/ $\Up^*(\om)=\om_0,$ where $\om_0$
is the standard symplectic form on~$\C^n\supset B_\ep$.
\label{hs2thm1}
\end{thm}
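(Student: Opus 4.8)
The plan is to prove this via Moser's trick (the homotopy method), the standard route to Darboux's theorem. First I would fix an arbitrary coordinate chart $\chi:B_{\ep_0}\ra M$ with $\chi(0)=p$, so that $\ti\om:=\chi^*(\om)$ is a closed, nondegenerate $2$-form on $B_{\ep_0}\subset\C^n$. At the origin, $\ti\om\vert_0$ is a nondegenerate antisymmetric bilinear form on $\C^n\cong\R^{2n}$, so by the linear normal form for skew forms there is a linear isomorphism $A$ of $\C^n$ with $A^*(\ti\om\vert_0)=\om_0\vert_0$. Replacing $\chi$ by $\chi\ci A$ and shrinking $\ep_0$, I may assume from the start that $\ti\om$ is closed and nondegenerate on $B_{\ep_0}$ with $\ti\om\vert_0=\om_0\vert_0$ at the origin.

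Next I would interpolate. Set $\om_t=\om_0+t(\ti\om-\om_0)$ for $t\in[0,1]$, a family of closed $2$-forms with endpoints $\om_0$ and $\om_1=\ti\om$. Since $\om_t\vert_0=\om_0\vert_0$ is nondegenerate and nondegeneracy is an open condition, after shrinking to a ball $B_{\ep_1}$ each $\om_t$ is symplectic there, uniformly in $t\in[0,1]$. As $\ti\om-\om_0$ is closed and $B_{\ep_1}$ is contractible, the Poincar\'e Lemma gives a $1$-form $\sigma$ on $B_{\ep_1}$ with $\d\sigma=\ti\om-\om_0$; because $\ti\om-\om_0$ vanishes at $0$, I can arrange $\sigma\vert_0=0$ by subtracting $\d f$ for a suitable linear function $f$, which does not change~$\d\sigma$.

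Then I would run the Moser flow. Using nondegeneracy of $\om_t$, define a time-dependent vector field $X_t$ by $\io_{X_t}\om_t=-\sigma$; this is a smooth family, and $X_t\vert_0=0$ since $\sigma\vert_0=0$, so the origin is a fixed point. Integrating $X_t$ gives a flow $\vp_t$ with $\vp_0=\id$ and $\vp_t(0)=0$; because $0$ is stationary, standard ODE estimates ensure $\vp_t$ is defined for all $t\in[0,1]$ on some uniform ball $B_\ep$. Cartan's formula together with closedness of $\om_t$ gives $\f{\d}{\d t}(\vp_t^*\om_t)=\vp_t^*\bigl(\d\io_{X_t}\om_t+\f{\d\om_t}{\d t}\bigr)=\vp_t^*\bigl(-\d\sigma+(\ti\om-\om_0)\bigr)=0$, so $\vp_1^*\ti\om=\vp_0^*\om_0=\om_0$ on $B_\ep$. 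Finally $\Up:=\chi\ci\vp_1$ is the required embedding, with $\Up(0)=p$ and~$\Up^*(\om)=\om_0$.

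The main obstacle is the uniform existence of the flow $\vp_t$ on a single ball $B_\ep$ for the whole interval $t\in[0,1]$: a priori the solutions of the non-autonomous system $\dot\vp_t=X_t\ci\vp_t$ could escape in finite time. This is controlled precisely because $X_t$ vanishes at the fixed origin, so trajectories starting near $0$ remain near $0$ for all $t\in[0,1]$; the remaining verifications (the linear normal form, the Poincar\'e Lemma with the vanishing condition on $\sigma$, and the Cartan-formula computation) are routine.
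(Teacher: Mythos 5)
Your proof is correct and is the standard Moser-trick argument for Darboux's Theorem; the paper does not prove this statement itself (it cites McDuff--Salamon), but the method you use is exactly the Moser deformation argument the paper employs in Proposition \ref{hs3prop1} to establish the family version with estimates. The one point you rightly flag --- uniform existence of the flow on a fixed ball, guaranteed because $X_t$ vanishes at the origin --- is handled the same way there.
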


Let $L$ be a real $n$-manifold. Then its tangent bundle $T^*L$ has a
{\it canonical symplectic form} $\hat\om$, defined as follows. Let
$(x_1,\ldots,x_n)$ be local coordinates on $L$. Extend them to local
coordinates $(x_1,\ldots,x_n,y_1,\ldots,y_n)$ on $T^*L$ such that
$(x_1,\ldots,y_n)$ represents the 1-form $y_1\d x_1+\cdots+y_n \d
x_n$ in $T_{(x_1,\ldots,x_n)}^*L$. Then $\hat\om=\d x_1\w\d y_1+
\cdots+\d x_n\w\d y_n$. Identify $L$ with the zero section in
$T^*L$. Then $L$ is a {\it Lagrangian submanifold\/} of
$(T^*L,\hat\om)$. The {\it Lagrangian Neighbourhood Theorem}
\cite[Th.~3.33]{McSa} shows that any compact Lagrangian submanifold
$L$ in a symplectic manifold looks locally like the zero section
in~$T^*L$.

\begin{thm} Let\/ $(M,\om)$ be a symplectic manifold and\/
$L\subset M$ a compact Lagrangian submanifold. Then there exists an
open tubular neighbourhood\/ $T$ of the zero section $L$ in $T^*L,$
and an embedding $\Phi:T\ra M$ with\/ $\Phi\vert_L=\id:L\ra L$ and\/
$\Phi^*(\om)=\hat\om,$ where $\hat\om$ is the canonical symplectic
structure on~$T^*L$.
\label{hs2thm2}
\end{thm}

We shall call $T,\Phi$ a {\it Lagrangian neighbourhood\/} of $L$.
Such neighbourhoods are useful for parametrizing nearby Lagrangian
submanifolds of $M$. Suppose that $\ti L$ is a Lagrangian
submanifold of $M$ which is $C^1$-close to $L$. Then $\ti L$ lies in
$\Phi(T)$, and is the image $\Phi(\Ga_\al)$ of the graph $\Ga_\al$
of a unique $C^1$-small 1-form $\al$ on $L$. As $\ti L$ is
Lagrangian and $\Phi^*(\om)=\hat\om$ we see that
$\hat\om\vert_{\Ga_\al}\equiv 0$. But $\hat\om\vert_{\Ga_\al}
=-\pi^*(\d\al)$, where $\pi:\Ga_\al\ra L$ is the natural projection.
Hence $\d\al=0$, and $\al$ is a {\it closed\/ $1$-form}. This
establishes a 1-1 correspondence between $C^1$-small closed 1-forms
on $L$ and Lagrangian submanifolds $\ti L$ close to $L$ in~$M$.

Let $(M,\om)$ be a compact symplectic manifold and $F:M\ra\R$ a
smooth function. The {\it Hamiltonian vector field\/} $v_F$ of $F$
is the unique vector field satisfying $v_F\cdot\om=\d F$. The Lie
derivative satisfies ${\cal L}_{v_F}\om=v_F\cdot\d\om
+\d(v_F\cdot\om)=0$, so the trajectory of $v_F$ gives a 1-parameter
family of symplectomorphisms $\Exp (sv_F):M\ra M$ for $s\in\R$,
called the {\it Hamiltonian flow\/} of $F$. If $L$ is a compact
Lagrangian in $M$ then $\Exp(sv_F)L$ is also a compact Lagrangian
in~$M$.

Two compact Lagrangians $L,L'$ in $(M,\om)$ are called {\it
Hamiltonian equivalent\/} if there exist Lagrangians
$L=L_0,\ldots,L_k=L'$ and functions $F_1,\ldots,F_k$ on $M$ with
$L_j=\Exp(v_{F_j})L_{j-1}$ for $j=1,\ldots,k$. In the situation of
Theorem \ref{hs2thm2}, a Lagrangian $\ti L$ which is $C^1$ close to
$L$ is Hamiltonian equivalent to $L$ if it corresponds to the graph
$\Ga_{\d f}$ of an {\it exact\/} 1-form $\d f$ on~$L$.

\subsection{The volume functional on Lagrangians}
\label{hs22}

Now let $(M,\om)$ be a symplectic manifold and $g$ a Riemannian
metric on $M$ compatible with $\om$. For a compact Lagrangian $L$ in
$M$, write $\Vol_g(L)$ for the volume of $L$, computed using the
induced Riemannian metric $h=g\vert_L$. The {\it volume
functional\/} is $\Vol_g:L\mapsto \Vol_g(L)$, regarded as a
functional on the infinite-dimensional manifold of all compact
Lagrangians in~$(M,\om)$.

\begin{dfn} A compact Lagrangian submanifold $L$ in $M$ is called
{\it Hamiltonian stationary}, or {\it H-minimal}, if it is a
critical point of the volume functional on its Hamiltonian
equivalence class of Lagrangians. That is, $L$ is Hamiltonian
stationary if
\begin{equation}
\ts\frac{\d}{\d s}\Vol_g\bigl(\Exp(sv_F)L\bigr)\big\vert_{s=0}=0
\label{hs2eq2}
\end{equation}
for all smooth $F:M\ra\R$. By Oh \cite[Th.~I]{Oh2}, who attributes
the result to Weinstein, \eq{hs2eq2} is equivalent to the
Euler--Lagrange equation
\begin{equation}
\d^*\al_H=0,
\label{hs2eq3}
\end{equation}
where $H$ is the mean curvature vector of $L$, and $\al_H=
(H\cdot\om)\vert_L$ is the associated 1-form on $L$, and $\d^*$ is
the Hodge dual of the exterior derivative $\d$ on $L$, computed
using the metric $h=g\vert_L$. Oh assumed that $M$ is a K\"ahler
manifold in his paper. However, the proof for \eq{hs2eq3} also works
when $M$ is a symplectic manifold with a compatible metric.
Explicitly, if $(x_1,\ldots,x_n)$ are local coordinates on $L$ and
$h=h_{ab}\,\d x_a\d x_b$, $\al=\al_a \d x_a$, we have
\begin{equation}
\d^*\al=-\frac{\pd h^{ab}}{\pd x_b}\,\al_a-h^{ab}\,\frac{\pd
\al_a}{\pd x_b}-\ha \, h^{ab}\al_a\frac{\pd}{\pd
x_b}\bigl(\ln\det(h_{cd})\bigr).
\label{hs2eq4}
\end{equation}
Here we use the convention that repeated indices stand for a
summation whenever there is no confusion.
\label{hs2def2}
\end{dfn}

\begin{dfn} Following Oh \cite[Def.~2.5]{Oh1}, a compact
Hamiltonian stationary Lagrangian $L$ in $M$ is called {\it
Hamiltonian stable}, or just {\it stable}, if the second variation
of $\Vol_g$ is nonnegative for all Hamiltonian variations of $L$,
that is, if
\begin{equation}
\ts\frac{\d^2}{\d s^2}\Vol_g\bigl(\Exp(sv_F)L\bigr)\big\vert_{s=0}
\ge 0
\label{hs2eq5}
\end{equation}
for all smooth $F:M\ra\R$. A geometric expression for the left hand
side of \eq{hs2eq5} is computed by Oh \cite[\S 3]{Oh2} when $M$ is a
K\"ahler manifold. Define a tensor $S=S_{jkl}\d x_j\d x_k\d x_l$ on
$L$ by $S(u,v,w)=\ban{J(B(u,v)),w}$ for all vector fields $u,v,w$ on
$L$, where $B$ is the second fundamental form of $L$ in $M$, so that
$B(u,v)$ is a normal vector field to $L$ in $M$ and $J(B(u,v))$ is a
vector field on $L$. Then $S$ is symmetric by \cite[Lem.~3.1]{Oh2}.
Let $F:M\ra\R$ be smooth and $f=F\vert_L$. Then Oh
\cite[Th.~3.4]{Oh2} proves:
\begin{equation}
\frac{\d^2}{\d s^2}\Vol_g\bigl(\Exp(sv_F)L\bigr)
\big\vert_{s=0}=\int_L
\begin{aligned}[t]\bigl(&\an{\De_H\d f,\d f}-\Ric(J\d f,J\d f)\\
&-2\an{\d f\ot\d f\ot\al_H,S}+\an{\d f,\al_H}^2\bigr)\,\d V,
\end{aligned}
\label{hs2eq6}
\end{equation}
where $\De_H=\d\d^*+\d^*\d$ is the Hodge Laplacian, and $\Ric$ is
the Ricci curvature of $g$ on $M$.

By a similar computation, one can derive the second variation
formula of volume for Hamiltonian deformation at a Lagrangian $L$
(not necessarily Hamiltonian stationary) in a K\"ahler manifold.
From this expression, one can write down the linearized operator of
$-\d^*\al_H$. More precisely, given a smooth function $f$ on $L$, we
extend it to a smooth function $F$ on $M$ and consider $L_s=\Exp
(sv_F)L$ whose mean curvature vector is denoted by $H_s$. Then we
have
\begin{equation}
\begin{aligned}
{\cal L}f=-\f{\d}{\d s}\,(\d^*\al_{H_s})\big\vert_{s=0}=
\begin{aligned}[t]
\De^2 f+\d^*\al_{\Ric^{\perp}(J\,\na f)}&-2\d^* \al_{B(JH,\na f)}\\
&-JH(JH(f)),
\end{aligned}
\end{aligned}
\label{hs2eq7}
\end{equation}
which is a fourth-order linear elliptic operator from $C^\iy(L)$ to
$C^\iy(L)$. Here $\De f=\d^*\d f$ is the Hodge Laplacian on
functions, and the normal vector $\Ric^{\perp}(v)$ for a normal
vector $v$ is characterized by $\Ric(v,w)=\an{\Ric^{\perp}(v),w}$
for any normal vector~$w$.

When $M$ is just a symplectic manifold with a compatible metric, the
second variation formula of volume and the linearized operator
${\cal L}$ do not have such nice expressions. However, since we will
work on small balls in Darboux coordinates, the linearized operator
${\cal L}$ at $L$ will be very close to the corresponding linearized
operator at $L$ in $\C^n$. This is made more precise in Proposition
\ref{hs4prop}. The estimate is good enough to pursue our argument
and prove the theorems. The expression \eq{hs2eq7} for ${\cal L}$ at
a Lagrangian in a K\"ahler manifold is helpful in understanding the
general symplectic picture.

When $L$ is a compact Hamiltonian stationary Lagrangian in a
K\"ahler manifold, we have
\begin{equation}
\ts\frac{\d^2}{\d s^2}\Vol_g\bigl(\Exp(sv_F)L\bigr)\big\vert_{s=0}
=\ban{{\cal L}f,f}_{L^2(L)}.
\label{hs2eq8}
\end{equation}
Thus $L$ is Hamiltonian stable if and only if the eigenvalues of
${\cal L}$ are all nonnegative. The interpretation of the kernel
$\Ker{\cal L}$ will be important below. From \eq{hs2eq8} we can see
that if $f\in C^\iy(L)$ and $v$ is the normal vector field to $L$ in
$M$ with $\al_v=\d f$, then $v$ is an infinitesimal Hamiltonian
deformation of $L$ as a Hamiltonian stationary Lagrangian, if and
only if ${\cal L}f=0$.
\label{hs2def3}
\end{dfn}

Now suppose that $(M,J,g)$ is a {\it Calabi--Yau $n$-fold}. Then we
can choose a holomorphic $(n,0)$-form $\Om$ on $M$ with
$\nabla\Om=0$, normalized so that
\begin{equation*}
\om^n/n!=(-1)^{n(n-1)/2}(i/2)^n\Om\w\bar\Om.
\end{equation*}
If $L$ is an oriented Lagrangian in $M$, then $\Om\vert_L\equiv
e^{i\th}\d V_{g_L}$, where $\d V_{g_L}$ is the volume form of $L$
defined using the metric $g\vert_L$ and the orientation, and
$e^{i\th}:L\ra\U(1)=\{z\in\C:\md{z}=1\}$ is the {\it phase
function\/} of $L$. We call $L$ {\it special Lagrangian\/} if it has
constant phase.

In the Calabi--Yau case, the picture above simplifies in two ways.
Firstly, as $g$ is Ricci-flat, the Ricci curvature terms in
\eq{hs2eq6} and \eq{hs2eq7} are zero. Secondly, the 1-form $\al_H$
associated to the mean curvature $H$ of $L$ is given by
$\al_H=-\d\th$. (This does not imply that $\al_H$ is exact, as $\th$
maps $L\ra\R/2\pi\Z$ rather than $L\ra\R$.) Thus, the condition
\eq{hs2eq3} that $L$ be Hamiltonian stationary becomes
$\d^*\d\th=0$, that is, the phase function $e^{i\th}:L\ra\U(1)$ is
harmonic as a map into $\U(1)=\R/2\pi\Z$, though it is not harmonic
as a map into~$\C$.

If $e^{i\th}$ lifts continuously to $\th:L\ra\R$, that is, if $L$ is
{\it graded}, and also $L$ is compact, then the maximum principle
implies that $\th$ is constant, so $L$ is special Lagrangian. Hence,
any compact, graded, Hamiltonian stationary Lagrangian in a
Calabi--Yau $n$-fold is special Lagrangian. Also, $e^{i\th}$ lifts
continuously to $\th:L\ra\R$ if and only if $-[\al_H]=[\d\th]=0$
in~$H^1(L;\R)$.

\subsection{Rigid Hamiltonian stationary Lagrangians in $\C^n$}
\label{hs23}

We now discuss Lagrangians in $\C^n$, with $g_0,\om_0,J_0$ as in
Example \ref{hs2ex1}. This is Calabi--Yau, so as in \S\ref{hs22}, a
compact Hamiltonian stationary Lagrangian $L$ in $\C^n$ is special
Lagrangian if $-[\al_H]=[\d\th]=0$ in $H^1(L;\R)$. But there are no
compact special Lagrangians $L$ in $\C^n$ for $n>0$, as a
(co)homological calculation shows that $\Vol_{g_0}(L)=0$. Hence any
immersed, compact, Hamiltonian stationary Lagrangian $L$ in $\C^n$
has $[\d\th]\ne 0$ in $H^1(L;\R)$, so that $H^1(L;\R)\ne 0$. This
constrains the possible topologies of Hamiltonian stationary
Lagrangians in~$\C^n$.

We define some notation.

\begin{dfn} The Lie group $\U(n)\lt\C^n$ acts on $\C^n$
preserving $g_0,\om_0,J_0$. For $x$ in the Lie algebra
$\u(n)\op\C^n$, write $v_x$ for the vector field on $\C^n$ induced
by the action of $\U(n)\lt\C^n$ on $\C^n$, and let $\mu_x:\C^n\ra\R$
be a moment map for $v_x$, that is, $\d\mu_x=v_x\cdot\om$. Each such
moment map is a real quadratic polynomial on $\C^n$ whose
homogeneous quadratic part is of type $(1,1)$. Define $W_n$ to be
the vector space of such moment maps, that is, elements $Q$ of $W_n$
are of the form
\begin{equation*}
Q(z_1,\ldots,z_n)=\ts a+\sum_{j=1}^n(b_jz_j+\bar{b}_j\bar{z}_j)
+\sum_{j,k=1}^nc_{jk}z_j\bar{z}_k
\end{equation*}
for $a\in\R$ and $b_j,c_{jk}\in\C$ with $\bar{c}_{jk}=c_{kj}$. Then
$\dim W_n=n^2+2n+1$, which is $\dim(\u(n)\op\C^n)+1$, since as
moment maps are unique up to the addition of constants we have
$W_n/\an{1}\cong\u(n)\op\C^n$.
\label{hs2def4}
\end{dfn}

\begin{lem} Let\/ $L$ be a compact Hamiltonian stationary
Lagrangian in $\C^n,$ and\/ ${\cal L}:C^\iy(L)\ra C^\iy(L)$ be as
in\/ \eq{hs2eq7}. Then
\begin{equation}
\{Q\vert_L:Q\in W_n\}\subseteq\Ker{\cal L}.
\label{hs2eq9}
\end{equation}
If also\/ $L$ is connected then\/ $\dim\{Q\vert_L:Q\in
W_n\}=n^2+2n+1-\dim G,$ where $G$ is the Lie subgroup of\/
$\U(n)\lt\C^n$ preserving~$L$.
\label{hs2lem1}
\end{lem}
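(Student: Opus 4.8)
The plan is to prove the two assertions separately: the inclusion \eq{hs2eq9} holds for any compact $L$, while the dimension count uses connectedness. For the inclusion, the idea is that each $Q\in W_n$ moves $L$ by isometries of $\C^n$, which automatically preserve the Hamiltonian stationary condition. In detail, any $Q\in W_n$ is the moment map $\mu_x$ of some $x\in\u(n)\op\C^n$, so by Definition \ref{hs2def4} its Hamiltonian vector field $v_Q$ coincides with the generator $v_x$ of the one-parameter subgroup $\exp(sx)\subset\U(n)\lt\C^n$. Since $\exp(sx)$ preserves $g_0,\om_0,J_0$, the Lagrangian $L_s:=\Exp(sv_Q)L=\exp(sx)\cdot L$ is isometric to $L$, and hence Hamiltonian stationary, for every $s$. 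The mean curvature $H_s$, the associated $1$-form $\al_{H_s}$ and the operator $\d^*$ are all natural under isometries, so $\d^*\al_{H_s}$ pulls back to the identically-zero function on $L$. Differentiating at $s=0$ and comparing with the definition \eq{hs2eq7} of ${\cal L}$ with $f=Q\vert_L$ then gives ${\cal L}(Q\vert_L)=-\f{\d}{\d s}(\d^*\al_{H_s})\vert_{s=0}=0$, which is \eq{hs2eq9}.

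For the dimension formula I would study the linear restriction map $R:W_n\ra C^\iy(L)$, $R(Q)=Q\vert_L$, whose image is the space in question. As $\dim W_n=n^2+2n+1$, by rank-nullity it is enough to show $\dim\Ker R=\dim G$, i.e.\ that the moment maps vanishing on $L$ form a space of dimension $\dim G$. The key bridge is the identity $\al_{v_x^N}=\d(\mu_x\vert_L)$, where $v_x^N$ denotes the component of $v_x$ normal to $L$: for $w$ tangent to $L$ one computes $\om(v_x^N,w)=\om(v_x,w)-\om(v_x^T,w)=\d\mu_x(w)-0=\d(\mu_x\vert_L)(w)$, using that $L$ is Lagrangian. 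Since $x$ lies in the Lie algebra $\g$ of $G$ exactly when $v_x$ is tangent to $L$, and $v\mapsto\al_v$ is injective on normal vector fields, this shows $x\in\g$ if and only if $\d(\mu_x\vert_L)=0$, i.e.\ (here using that $L$ is connected) if and only if $\mu_x\vert_L$ is constant.

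It then remains to identify $\Ker R$ with $\g$ via the projection $\phi:W_n\ra\u(n)\op\C^n$, $\mu_x\mapsto x$, whose kernel is the line $\R\cdot1$ of constant moment maps. If $Q\vert_L=0$ then $\d(Q\vert_L)=0$, so $\phi(Q)\in\g$; conversely, if $x\in\g$ then $\mu_x\vert_L\equiv c$ for a constant $c$, and $Q:=\mu_x-c\in W_n$ satisfies $Q\vert_L=0$ and $\phi(Q)=x$. Hence $\phi(\Ker R)=\g$, and $\phi$ is injective on $\Ker R$ since a nonzero constant does not vanish on $L$; therefore $\dim\Ker R=\dim\g=\dim G$ and $\dim\Image R=n^2+2n+1-\dim G$. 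The step I expect to require the most care is the one in the first paragraph: one must verify that every geometric ingredient of $\d^*\al_{H_s}$ transforms equivariantly under $\exp(sx)$, so that its $s$-derivative genuinely vanishes, rather than merely that each $L_s$ is separately stationary; a direct verification of ${\cal L}(\mu_x\vert_L)=0$ from the explicit formula \eq{hs2eq7} is a more computational alternative.
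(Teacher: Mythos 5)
Your proposal is correct and follows essentially the same route as the paper: the inclusion \eq{hs2eq9} comes from the fact that the flow of $v_x=v_{\mu_x}$ is a Hamiltonian isotopy through elements of $\U(n)\lt\C^n$, which preserve the Hamiltonian stationary condition, and the dimension count applies rank--nullity to the restriction map $W_n\ra C^\iy(L)$, identifying its kernel with $\g$ via the Lagrangian condition $\d(\mu_x\vert_L)=v_x\cdot(\om_0\vert_L)$ and connectedness of $L$ (your explicit treatment of the converse direction, via injectivity of $v\mapsto\al_v$ on normal fields, is a detail the paper leaves implicit). The caveat in your last sentence is unnecessary: since each $L_s$ is separately stationary, $\d^*\al_{H_s}$ is the zero function on $L_s$ for every $s$, so its pullback to $L$ under any identification is identically zero and the $s$-derivative vanishes without any equivariance argument.
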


\begin{proof} As in \S\ref{hs22}, we have $f\in\Ker{\cal L}$ if and
only if $\d f$ is the 1-form associated to an infinitesimal
Hamiltonian deformation of $L$ as a Hamiltonian stationary
Lagrangian. If $x$ lies in $\u(n)\op\C^n$ with vector field $v_x$
and moment map $\mu_x$ then $v_x$ is the Hamiltonian vector field of
$\mu_x$, so flow by $v_x$ induces a Hamiltonian deformation of $L$.
Since the action of $\U(n)\lt\C^n$ takes Hamiltonian stationary
Lagrangians to Hamiltonian stationary Lagrangians, $v_x\vert_L$ is
an infinitesimal Hamiltonian deformation of $L$ as a Hamiltonian
stationary Lagrangian. The associated 1-form on $L$ is
$\d\mu_x\vert_L$. So $\mu_x\vert_L$ lies in $\Ker{\cal L}$. As every
$Q\in W_n$ is a moment map $\mu_x$ for some $x\in\u(n)\op\C^n$,
equation \eq{hs2eq9} follows.

For the second part, we have $\dim\{Q\vert_L:Q\in W_n\}=\dim
W_n-\dim\{Q\in W_n:Q\vert_L\equiv 0\}$, where $\dim W_n=n^2+2n+1$.
Let $\g$ be the Lie algebra of $G$, and let $x\in\g$ and $\mu_x$ be
a moment map for $x$. Then $v_x\vert_L$ is tangent to $L$, as $x$
preserves $L$, so $\d(\mu_x\vert_L)=v_x\cdot(\om \vert_L)=0$ as $L$
is Lagrangian. Since $L$ is connected this implies that
$\mu_x\vert_L$ is constant, and there is a unique choice of moment
map $\mu_x$ for $x$ such that $\mu_x\vert_L\equiv 0$. This yields an
isomorphism between $\g$ and $\{Q\in W_n:Q\vert_L\equiv 0\}$, so
$\dim\{Q\in W_n:Q\vert_L\equiv 0\}=\dim\g=\dim G$, and the lemma
follows.
\end{proof}

The following definition is new, as far as the authors know.

\begin{dfn} Let $L$ be a compact, Hamiltonian stationary
Lagrangian in $\C^n$. We call $L$ {\it Hamiltonian rigid}, or just
{\it rigid}, if equality holds in \eq{hs2eq9}. That is, $L$ is
Hamiltonian rigid if all infinitesimal Hamiltonian deformations of
$L$ as a Hamiltonian stationary Lagrangian in $\C^n$ come from the
action of $\u(n)\op\C^n$. By Lemma \ref{hs2lem1}, $L$ is Hamiltonian
rigid if $\dim\Ker{\cal L}=n^2+2n+1-\dim G$.
\label{hs2def5}
\end{dfn}

Note that any Hamiltonian rigid $L$ must be {\it connected}, since
otherwise we could apply different elements of $\u(n)\op\C^n$ to
different connected components of $L$ to prove equality does not
hold in \eq{hs2eq9}. It seems likely that in some sense, {\it
most\/} compact, connected, Hamiltonian stationary Lagrangians in
$\C^n$ are rigid, since in generic situations one expects the
kernels of elliptic operators to be as small as possible, given any
geometric constraints on index, etc.

We now give examples of stable, rigid, Hamiltonian stationary
Lagrangians in $\C^n$. In the first, for completeness, we give a
full proof of rigidity and stability.

\begin{ex} Define a submanifold $L_n$ in $\C^n$ by
\begin{equation*}
\ts L_n=\bigl\{(x_1 e^{is},\ldots,x_n e^{is}): 0\leq s <
\pi,\;\sum_{j=1}^n x_j^2=1,\; (x_1, \ldots, x_n)\in \R^n \bigr\},
\end{equation*}
which is diffeomorphic to $({\cal S}^1\t{\cal S}^{n-1})/\Z_2$ by
identifying $(s,x)$ and $(s+\pi,-x)$ in ${\cal S}^1 \t {\cal
S}^{n-1}.$ Lee and Wang \cite{LeWa} prove that $L_n$ is a
Hamiltonian stationary Lagrangian and its mean curvature vector
satisfies $H=-nF,$ where $F$ is the position vector. Moreover, the
induced metric on $L_n$ computed there is a product metric. More
precisely, assume that $\{v_j\}_{j=1}^{n-1}$ is a local orthonormal
basis for ${\cal S}^{n-1}$ and $v_0=(x_1,\ldots,x_n)$, then
$e_0=\f{\pd}{\pd s}=i\,e^{is}v_0=JF$ and $e_j=e^{is}v_j$,
$j=1,\ldots, n-1$, will form a local orthonormal basis for $L_n$. We
will prove that $L_n$ is Hamiltonian stable and rigid.

The linearized operator \eq{hs2eq7} for $L_n$ in $\C^n$ and $f \in
C^\iy(L_n)$ has the form
\begin{equation*}
{\cal L}f=\De^2 f-2\d^* \al_{B(JH,\na f)}-JH(JH(f)).
\end{equation*}
Note that $JH=-ne_0=-n\f{\pd}{\pd s}$ and
\begin{equation*}
\an{B(e_i,e_j),H}=-n\an{B(e_i,e_j), F}=n\de_{ij}.
\end{equation*}
Hence
\begin{equation*}
\an{B(JH, \na f), Je_j}=-\an{B(e_j, \na f), H}=-n\an{e_j, \na
f}=-ne_j(f),
\end{equation*}
that is, $B(JH, \na f)=-nJ\na f$ and $\al_{B(JH,\na f)}=n\d f$.
Therefore, we have
\begin{equation}
{\cal L}f=\De^2 f-2n\De f-n^2 \f{\pd^2
f}{\pd s^2}.
\label{hs2eq10}
\end{equation}
We can lift a function on $L_n$ to ${\cal S}^1 \t {\cal S}^{n-1}$
and consider $f$ as a $\Z_2$-invariant function on ${\cal S}^1 \t
{\cal S}^{n-1}$ instead. Since the induced metric is a product
metric, the products of eigenfunctions on ${\cal S}^1$ and
eigenfunctions on ${\cal S}^{n-1}$ respectively form a complete
basis for functions on ${\cal S}^1 \t {\cal S}^{n-1}$. That is,
$f=\sum_{k,l} a_{kl}\cos ks \,\vp _l+b_{kl}\sin ks \,\vp _l,$ where
$a_{kl}$ and $b_{kl}$ are constants, $k$ is a nonnegative integer,
and $\vp_l$ is an eigenfunction of the Laplacian on ${\cal S}^{n-1}$
with eigenvalue $\la _l$. Since the eigenfunctions of the Laplacian
on ${\cal S}^{n-1}$ are homogenous polynomials in $\R^n$, and $f$ is
$\Z_2$-invariant, the sum of $k$ and the degree $\deg\vp_l$ of
$\vp_l$ must be even.

From \eq{hs2eq10}, it follows that $\cos ks \,\vp _l$ and $\sin ks
\,\vp _l$ are eigenfunctions for ${\cal L}$ and form a complete
basis. To study $\Ker{\cal L}$, we only need to check these
functions. Rewrite \eq{hs2eq10} as ${\cal L}f=(\De -n)^2
f+n^2(-\f{\pd^2 }{\pd s^2}-1)f.$ Suppose $\cos ks \,\vp _l$ or $\sin
ks \,\vp _l$ is in $\Ker{\cal L}$. It follows that
\begin{equation*}
(k^2+\la_l-n)^2+n^2 (k^2-1)=0,
\end{equation*}
which implies $k\le 1$. When $k=1$, we must have $\la _l=n-1$ and
thus $\deg\vp_l$ is 1. These solutions and their combinations come
from the restriction functions on $L_n$ of elements in $W_n$ with
the form $\sum_{j=1}^n b_jz_j+\bar{b}_j\bar{z}_j$ in Definition
\ref{hs2def4}. When $k=0$, we must have $\la_l(\la_l-2n)=0$ and
thus $\la_l=0$ or $\la_l=2n$. Hence $\deg\vp_l= 0$ or 2. These
solutions and their combinations come from the restriction
functions of elements in $W_n$ with the form
$a+\sum_{j,k=1}^nc_{jk}z_j\bar{z}_k$ in Definition \ref{hs2def4}.
This completes the proof that $L_n$ is Hamiltonian rigid.

Now we show that $L_n$ is Hamiltonian stable. From \eq{hs2eq8},
this is equivalent to the eigenvalues for ${\cal L}$ all being
nonnegative. The eigenfunctions for ${\cal L}$ are $\cos ks\,
\vp_l$ and $\sin ks\,\vp_l$ with eigenvalue $(k^2+\la_l-n)^2+n^2
(k^2-1)$, which is nonnegative for $k\ge 1$. It is also
nonnegative when $k=0$ and $\deg\vp_l=0$ or $\deg\vp_l>1$. The
case $k=0$ and $\deg\vp_l=1$ does not occur as it is not
$\Z_2$-invariant. Therefore, $L_n$ is Hamiltonian stable.
\label{hs2ex2}
\end{ex}

\begin{ex} Let $a_1,\ldots,a_n>0$. Define a torus
$T^n_{a_1,\ldots,a_n}$ in $\C^n$ by
\begin{equation}
T^n_{a_1,\ldots,a_n}=\bigl\{(z_1,\ldots,z_n)\in\C^n:\md{z_j}=a_j, \;
j=1,\ldots,n\bigr\}.
\label{hs2eq11}
\end{equation}
Then Oh \cite[Th.~IV]{Oh2} proves that $T^n_{a_1,\ldots,a_n}$ is a
Hamiltonian stationary Lagrangian, and is stable and rigid. He also
remarks that two tori $T^n_{a_1,\ldots,a_n}$ and
$T^n_{a_1',\ldots,a_n'}$ are not Hamiltonian isotopic to one another
if $(a_1,\ldots,a_n)\ne (a_1',\ldots,a_n')$ (this needs caution: if
$a_1',\ldots,a_n'$ are a permutation of $a_1,\ldots,a_n$, then
$T^n_{a_1',\ldots,a_n'}$ is Hamiltonian isotopic to
$T^n_{a_1,\ldots,a_n}$, but after a diffeomorphism of $T^n$ not
isotopic to the identity), and conjectures that the
$T^n_{a_1,\ldots,a_n}$ are globally volume-minimizing under
Hamiltonian deformations.
\label{hs2ex3}
\end{ex}

\begin{ex} Amarzaya and Ohnita study Lagrangians with parallel
second fundamental form in \cite{AmOh}. These examples must be
Hamiltonian stationary since their mean curvature vectors are
parallel. They prove in the paper that the following irreducible
symmetric R-spaces are Hamiltonian stable and rigid:
\begin{itemize}
\setlength{\itemsep}{0pt}
\setlength{\parsep}{0pt}
\item[{\rm(i)}] $Q_{2,p+1}(\R)\cong ({\cal S}^1\t{\cal S}^{p+2})/\Z_2
\subset \C^{p+3}$ for $p\ge 1;$
\item[{\rm(ii)}] $\U(p)\subset \C^{p^2}$ for $p\ge 2;$
\item[{\rm(iii)}] $\U(p)/{\rm O}(p)\subset\C^{\f{p(p+1)}{2}}$ for
$p\ge 3;$
\item[{\rm(iv)}] $\U(2p)/{\rm Sp}(p)\subset \C^{p(2p-1)}$ for
$p\ge 3;$ and
\item[{\rm(v)}] $T\cdot(E_6/F_4)\subset\C^{27}$.
\end{itemize}
We refer to \cite{AmOh} for the details of these examples. Example
\ref{hs2ex2} is the same as $Q_{2,p+1}(\R)$ in (i), but their proof
of stability and rigidity is different to ours.
\label{hs2ex4}
\end{ex}

\section{Darboux coordinates with estimates}
\label{hs3}

\subsection{Families of Darboux coordinate systems for all $p\in M$}
\label{hs31}

We will need the following notation.

\begin{dfn} Let $(M,\om)$ be a symplectic $2n$-manifold, and $g$ a
Riemannian metric on $M$ compatible with $\om$. Then at each point
$p\in M$ the structures $\om\vert_p$, $g\vert_p$ on $T_pM$ are
isomorphic to $\om_0,g_0$ on $\C^n$, where $\om_0,g_0$ are as in
\eq{hs2eq1}. The linear automorphism group of $(\C^n,\om_0,g_0)$ is
the unitary group~$\U(n)$.

Write $U$ for the $\U(n)$ {\it frame bundle} of $M$, with projection
$\pi:U\ra M$. That is, points of $U$ are pairs $(p,\up)$ with $p\in
M$ and $\up:\C^n\ra T_pM$ an isomorphism of real vector spaces with
$\up^*(\om\vert_p)=\om_0$, and $\up^*(g\vert_p)=g_0$, and
$\pi:(p,\up)\mapsto p$. Then $\U(n)$ acts freely on the right on $U$
by $\ga:(p,\up)\mapsto (p,\up\ci\ga)$ for $\ga\in\U(n)$ and
$(p,\up)\in U$. The orbits of $\U(n)$ in $U$ are fibres of $\pi$,
and $\pi:U\ra M$ is a {\it principal $\U(n)$-bundle}. Thus $U$ is a
real manifold of dimension $n^2+2n$, which is compact if $M$ is
compact.
\label{hs3def1}
\end{dfn}

We now show that for compact $M$ we can choose Darboux coordinate
systems as in Theorem \ref{hs2thm1} for all $(p,\up)\in U$, smoothly
and $\U(n)$-equivariantly in~$(p,\up)$.

\begin{prop} Let\/ $(M,\om)$ be a compact symplectic $2n$-manifold,
$g$ a Riemannian metric on $M$ compatible with\/ $\om,$ and\/ $U$
the $\U(n)$ frame bundle of\/ $M$. Then for small\/ $\ep>0$ we can
choose a family of embeddings $\Up_{p,\up}:B_\ep\ra M$ depending
smoothly on $(p,\up)\in U,$ where $B_\ep$ is the ball of radius
$\ep$ about\/ $0$ in $\C^n,$ such that for all\/ $(p,\up)\in U$ we
have:
\begin{itemize}
\setlength{\itemsep}{0pt}
\setlength{\parsep}{0pt}
\item[{\rm(i)}] $\Up_{p,\up}(0)=p$ and\/ $\d\Up_{p,\up}\vert_0=
\up:\C^n\ra T_pM;$
\item[{\rm(ii)}] $\Up_{p,\up\ci\ga}\equiv\Up_{p,\up}\ci\ga$ for
all\/ $\ga\in\U(n);$
\item[{\rm(iii)}] $\Up_{p,\up}^*(\om)=\om_0=\ts\frac{i}{2}
\sum_{j=1}^n\d z_j\w\d\bar{z}_j;$ and
\item[{\rm(iv)}] $\Up_{p,\up}^*(g)=g_0+O(\md{\bs z})=
\sum_{j=1}^n\md{\d z_j}^2+O(\md{\bs z})$.
\end{itemize}
\label{hs3prop1}
\end{prop}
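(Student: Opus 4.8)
The plan is to construct the family $\Up_{p,\up}$ in two stages: first produce a smooth, $\U(n)$-equivariant family of embeddings satisfying (i), (ii), (iv) but only $\Up_{p,\up}^*(\om)=\om_0+O(\md{\bs z})$, and then correct by a Moser-type argument to upgrade the symplectic form to agree with $\om_0$ exactly, as in (iii), while preserving the other properties to the stated order. For the first stage I would use the exponential map of $g$: set $\ti\Up_{p,\up}(\bs z)=\exp_p(\up(\bs z))$, where $\exp_p:T_pM\ra M$ is the Riemannian exponential map. This depends smoothly on $(p,\up)$ since $g$ is smooth and $M$ is compact, and for $\ep$ small enough (uniformly in $p$, by compactness) it is an embedding on $B_\ep$. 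Property (i) is immediate from $\d\exp_p\vert_0=\id$, and (ii) holds because $\exp_p(\up\ci\ga(\bs z))=\exp_p(\up(\ga\bs z))$. Property (iv) follows from the standard fact that in geodesic normal coordinates the metric agrees with the Euclidean metric to first order, i.e. $\ti\Up_{p,\up}^*(g)=g_0+O(\md{\bs z})$, with the $O(\md{\bs z})$ bound uniform in $(p,\up)$ by compactness of $U$.

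The main work is the second stage, enforcing (iii) exactly without destroying (i), (ii), (iv). The two-form $\om_1:=\ti\Up_{p,\up}^*(\om)$ on $B_\ep$ is closed and agrees with $\om_0$ at the origin (since $\up^*(\om\vert_p)=\om_0$). I would apply the Moser deformation argument underlying Darboux' theorem (Theorem \ref{hs2thm1}) to the family $\om_s=(1-s)\om_0+s\om_1$, $s\in[0,1]$: writing $\om_1-\om_0=\d\la$ for a one-form $\la$ vanishing to second order at $0$ (obtained from the homotopy/Poincar\'e operator, which can be chosen to depend smoothly on the data and to be $\U(n)$-equivariant), solve $\iota_{v_s}\om_s=-\la$ for the time-dependent vector field $v_s$ and integrate its flow $\psi$ from $s=0$ to $s=1$ to get $\psi^*\om_1=\om_0$. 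Setting $\Up_{p,\up}=\ti\Up_{p,\up}\ci\psi$ then gives $\Up_{p,\up}^*(\om)=\om_0$, which is (iii).

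The obstacle to watch is simultaneously preserving \emph{all} of (i), (ii), (iv) through the Moser flow, uniformly in $(p,\up)$. Since $\la$ vanishes to second order at $0$, the vector field $v_s$ vanishes to first order there, so its flow $\psi$ fixes $0$ and has $\d\psi\vert_0=\id$ up to $O(\md{\bs z})$ corrections; thus (i) survives and, because $\psi=\id+O(\md{\bs z}^2)$, the pullback metric is still $g_0+O(\md{\bs z})$, giving (iv). Equivariance (ii) requires that every choice in the construction—the homotopy operator producing $\la$, hence $v_s$ and $\psi$—be made $\U(n)$-equivariantly; this is where one must be careful, averaging over $\U(n)$ if necessary or using the fact that $\om_0,g_0$ are $\U(n)$-invariant so that the naturality of the constructions relative to the $\U(n)$-action can be invoked. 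Finally, all estimates and the lower bound on $\ep$ must be uniform in $(p,\up)\in U$, which follows from smoothness of the whole construction in $(p,\up)$ together with compactness of the frame bundle~$U$.
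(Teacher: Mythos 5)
Your proposal is correct and follows essentially the same route as the paper: geodesic normal coordinates $\exp_p\ci\up$ for the first stage, then Moser's deformation argument with a primitive $1$-form vanishing to second order at $0$ (so the flow fixes $0$ with derivative the identity there, preserving (i) and (iv)), and $\U(n)$-equivariance secured by averaging the primitive over $\U(n)$, with uniformity in $(p,\up)$ from compactness of $U$. No substantive differences.
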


\begin{proof} Let $\ep'>0$, and for each $(p,\up)\in U$ define
$\Up'_{p,\up}:B_{\ep'}\ra M$ by $\Up'_{p,\up}=\exp_p\ci\up
\vert_{B_{\ep'}}$, where $\up\vert_{B_{\ep'}}:B_{\ep'}\ra T_pM$ and
$\exp_p:T_pM\ra M$ is geodesic normal coordinates at $p$ using $g$.
Then $\Up'_{p,\up}$ is a diffeomorphism near $0\in B_{\ep'}$ and
$p\in M$, so as $M,U$ are compact, if we take $\ep'>0$ small enough
$\Up'_{p,\up}$ is an embedding for all $(p,\up)\in U$. It is
immediate that $\Up_{p,\up}'$ is smooth in $p,\up$, and
$\Up'_{p,\up}(0)=p$, $\d\Up'_{p,\up}\vert_0=\up$, and
$\Up'_{p,\up\ci\ga}\equiv\Up'_{p,\up}\ci\ga$ for all $\ga\in\U(n)$.
Also $(\Up_{p,\up}')^*(\om)\vert_0=\om_0$ and
$(\Up_{p,\up}')^*(g)\vert_0=g_0$, so by Taylor's Theorem we have
$(\Up_{p,\up}')^*(\om)=\om_0+O(\md{\bs z})$ and
$(\Up_{p,\up}')^*(g)=g_0+O(\md{\bs z})$. Thus the $\ep'$ and
$\Up'_{p,\up}$ satisfy all the proposition except (iii), which is
replaced by~$(\Up_{p,\up}')^*(\om)=\om_0+O(\md{\bs z})$.

We shall use Moser's method for proving Darboux' Theorem in
\cite{Mose} to modify the $\Up'_{p,\up}$ to $\Up_{p,\up}$ with
$\Up_{p,\up}^* (\om)=\om_0$, preserving the other properties
(i),(ii),(iv). Define closed 2-forms $\om^s_{p,\up}$ on $B_{\ep'}$
for $(p,\up)\in U$ and $s\in[0,1]$ by
$\om^s_{p,\up}=(1-s)\om_0+s(\Up_{p,\up}')^*(\om)$. As $\om_0\vert_0=
(\Up_{p,\up}')^*(\om)\vert_0$, we have $\om^s_{p,\up}\vert_0=\om_0$,
so $\om^s_{p,\up}$ is symplectic near 0 in $B_{\ep'}$. Since
$[0,1]\t U$ is compact, by making $\ep'>0$ smaller we can suppose
$\om^s_{p,\up}$ is symplectic on $B_{\ep'}$ for all $s\in[0,1]$ and
$(p,\up)\in U$.

As $\om_0,(\Up_{p,\up}')^*(\om)$ are closed, we can choose a family
of 1-forms $\ze_{p,\up}$ on $B_{\ep'}$, smooth in $p,\up$, such that
$\d\ze_{p,\up}=\om_0-(\Up_{p,\up}')^*(\om)$. Since $\om_0\vert_0=
(\Up_{p,\up}')^*(\om)\vert_0$, we can choose the $\ze_{p,\up}$ to
satisfy $\md{\ze_{p,\up}}=O(\ms{\bs z})$. We also choose the family
$\ze_{p,\up}$ to be $\U(n)$-equivariant, in the sense that
$\ze_{p,\up\ci\ga}=\ga\vert_{B_{\ep'}}^*(\ze_{p,\up})$ for all
$\ga\in\U(n)$, noting that $\ga:\C^n\ra\C^n$ restricts to
$\ga\vert_{B_{\ep'}}:B_{\ep'}\ra B_{\ep'}$. To do this, we first
choose $\ze_{p,\up}'$ not necessarily $\U(n)$-equivariant, and then
take $\ze_{p,\up}$ to be the average of
$(\ga^{-1})^*(\ze_{p,\up\ci\ga}')$ over~$\ga\in\U(n)$.

Now let $v^s_{p,\up}$ be the unique vector field on $B_{\ep'}$ with
$v^s_{p,\up}\cdot\om^s_{p,\up}=\ze_{p,\up}$, which is well-defined
as $\om^s_{p,\up}$ is symplectic, and varies smoothly in $s,p,\up$.
Then $\d(v^s_{p,\up}\cdot\om^s_{p,\up})=\om_0-(\Up_{p,\up}')^*
(\om)$. As $\md{\ze_{p,\up}}=O(\ms{\bs z})$ we have
$\md{v_{p,\up}^s}=O(\ms{\bs z})$. For $0<\ep\le\ep'$ we construct a
family of embeddings $\vp^s_{p,\up}:B_\ep\ra B_{\ep'}$ with
$\vp^0_{p,\up}=\id:B_\ep\ra B_\ep\subset B_{\ep'}$ by solving the
o.d.e.\ $\frac{\d}{\d s}\vp_{p,\up}^s=v_{p,\up}^s\ci\vp_{p,\up}^s$.
By compactness of $[0,1]\t U$, this is possible provided $\ep>0$ is
small enough. Then $(\vp_{p,\up}^s)^*(\om^s_{p,\up})=\om_0$ for all
$s$, so that $(\vp_{p,\up}^1)^*\bigl((\Up_{p,\up}')^*(\om)\bigr)
=\om_0$. Also, as $\md{v_{p,\up}^s}=O(\ms{\bs z})$ we see that
$\vp_{p,\up}^s(0)=0$ and $\d\vp_{p,\up}^s\vert_0=\id:\C^n\ra\C^n$
for all $s\in[0,1]$. And the $\U(n)$-equivariance of the
$\ze_{p,\up}$ implies $\U(n)$-equivariance of the $v_{p,\up}^s$,
which in turn implies that $\vp_{p,\up\ci\ga}^s=
\ga^{-1}\ci\vp_{p,\up}^s\ci\ga$ for all $s\in[0,1]$, $(p,\up)\in U$
and~$\ga\in\U(n)$.

Define $\Up_{p,\up}=\Up_{p,\up}'\ci\vp_{p,\up}^1$. Then
$\Up_{p,\up}$ depends smoothly on $p,\up$. Part (i) holds as
$\vp_{p,\up}^1(0)=0$, $\d\vp_{p,\up}^1\vert_0=\id$,
$\Up'_{p,\up}(0)=p$ and $\d\Up'_{p,\up}\vert_0=\up$. Part (ii) holds
by $\U(n)$-equivariance of the $\Up'_{p,\up}$, and
$\vp_{p,\up\ci\ga}^1=\ga^{-1}\ci\vp_{p,\up}^1\ci\ga$ for all
$(p,\up)\in U$ and $\ga\in\U(n)$. Part (iii) follows from
$(\vp_{p,\up}^1)^*\bigl((\Up_{p,\up}')^*(\om)\bigr)=\om_0$, and (iv)
from $\vp_{p,\up}^1(0)=0$, $\d\vp_{p,\up}^1\vert_0=\id$,
and~$(\Up'_{p,\up})^*(g)=g_0+O(\md{\bs z})$.
\end{proof}

\begin{rem} When $(M,J,g)$ is a K\"ahler manifold with K\"ahler
form $\om$, by applying the same argument to holomorphic normal
coordinates, we can obtain the better approximation
$\Up_{p,\up}^*(\om)=\om_0=\ts\frac{i}{2}
\sum_{j=1}^n\d z_j\w\d\bar{z}_j$ and
\begin{equation*}
\Up_{p,\up}^*(g)=\ts \sum_{j=1}^n\md{\d z_j}^2+\frac{1}{2}
\sum_{i,j,k,l}\Re\bigl(R_{i\bar{j}k\bar{l}}(p)z^i z^k \d\bar z^{j}\d\bar
z^{l}\bigr)+O\bigl(\md{\bs z}^3\bigr).
\end{equation*}
\end{rem}

\subsection{Dilations, and uniform estimates of $t^{-2}(\Up_{p,\up}\ci
t)^*(g)$}
\label{hs32}

We continue to use the notation of \S\ref{hs31} and Proposition
\ref{hs3prop1}. Let $R>0$ be given. For $0<t\le R^{-1}\ep$, consider
the dilation map $t:B_R\ra B_\ep$ mapping $t:(z_1,\ldots,z_n)\mapsto
(tz_1,\ldots,tz_n)$. Then $\Up_{p,\up}\ci t$ is an embedding $B_R\ra
M$, so we can consider the pullbacks $(\Up_{p,\up}\ci t)^*(\om)$,
$(\Up_{p,\up}\ci t)^*(g)$. Since $t^*(\om_0)=t^2\om_0$ and
$t^*(g_0)=t^2g_0$, Proposition \ref{hs3prop1}(iii),(iv) give
\begin{equation}
(\Up_{p,\up}\ci t)^*(\om)=t^2\om_0 \quad\text{and}\quad
(\Up_{p,\up}\ci t)^*(g)=t^2g_0+O(t^3\md{\bs z}),
\label{hs3eq1}
\end{equation}
where the power $t^3=t^2\cdot t$ in $O(t^3\md{\bs z})$ combines the
fact that $t^*(\d z_i\d\bar z_j)=t^2\d z_i\d\bar z_j$, that is,
tensors of type (0,2) scale by $t^2$ under $t^*$, and regarding
$\md{\bs z}$ as a function on $\C^n$ we have $t^*(\md{\bs
z})=t\md{\bs z}$. Multiplying \eq{hs3eq1} by $t^{-2}$ gives
\begin{equation}
t^{-2}(\Up_{p,\up}\ci t)^*(\om)=\om_0 \quad\text{and}\quad
t^{-2}(\Up_{p,\up}\ci t)^*(g)=g_0+O(t\md{\bs z})\quad\text{on
$B_R$.}
\label{hs3eq2}
\end{equation}

Define a Riemannian metric $g^t_{p,\up}$ on $B_R$ by $g^t_{p,\up}=
t^{-2}(\Up_{p,\up}\ci t)^*(g)$. This depends smoothly on
$t\in(0,R^{-1}\ep]$ and $(p,\up)\in U$, and satisfies
$g^t_{p,\up}=g_0+O(t\md{\bs z})$ by \eq{hs3eq2}. Since $g$ is
compatible with $\om$, $t^{-2}(\Up_{p,\up}\ci t)^*(g)$ is compatible
with $t^{-2}(\Up_{p,\up}\ci t)^*(\om)$, and thus $g^t_{p,\up}$ is
compatible with the fixed symplectic form $\om_0$ on $B_R$ for all
$t,p,\up$. We prove {\it uniform estimates\/} on these
metrics~$g^t_{p,\up}$.

\begin{prop} There exist positive constants
$C_0,C_1,C_2,\ldots$ such that for all\/ $t\in(0,\ha R^{-1}\ep]$
and\/ $(p,\up)\in U,$ the metric $g^t_{p,\up}=t^{-2}(\Up_{p,\up}\ci
t)^*(g)$ on\/ $B_R$ satisfies the estimates
\begin{equation}
\nm{g^t_{p,\up}-g_0}_{C^0}\le C_0t\quad\text{and}\quad
\nm{\pd^kg^t_{p,\up}}_{C^0}\le C_kt^k \quad\text{for
$k=1,2,\ldots,$}
\label{hs3eq3}
\end{equation}
where norms are taken w.r.t.\ $g_0,$ and\/ $\pd$ is the Levi-Civita
connection of\/~$g_0$.
\label{hs3prop2}
\end{prop}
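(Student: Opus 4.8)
The plan is to reduce these estimates to the elementary behaviour of a smooth family of metrics under the coordinate dilation $t:B_R\ra B_\ep$. First I would set $G_{p,\up}=\Up_{p,\up}^*(g)$, which by Proposition \ref{hs3prop1} is a family of Riemannian metrics on $B_\ep$ depending smoothly on $(p,\up)\in U$ and satisfying $G_{p,\up}\vert_0=g_0$ by part~(i). Writing $G_{p,\up}=(G_{p,\up})_{ab}\,\d x_a\,\d x_b$ in the underlying real coordinates $(x_1,\ldots,x_{2n})$ on $\C^n$, the component functions $(G_{p,\up})_{ab}$ are smooth in $\bs z$ and in $(p,\up)$, and $(G_{p,\up})_{ab}(0)=\de_{ab}$.

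The one computation that makes everything work is the effect of the dilation on components. Since $t^*(\d x_a\,\d x_b)=t^2\,\d x_a\,\d x_b$, pulling back by $t$ and multiplying by $t^{-2}$ merely rescales the coordinate argument of each component function, giving $(g^t_{p,\up})_{ab}(\bs z)=(G_{p,\up})_{ab}(t\bs z)$ on $B_R$. Next I would note that because $g_0$ has constant components $\de_{ab}$ in these coordinates its Christoffel symbols vanish, so the covariant derivative $\pd$ acts on any tensor as ordinary partial differentiation of components, and the norm $\nm{\cdot}_{C^0}$ taken w.r.t.\ $g_0$ is, up to a fixed dimensional constant, the supremum of the absolute values of the components. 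Hence $\pd^kg^t_{p,\up}$ has components $\pd^k\bigl[(G_{p,\up})_{ab}(t\bs z)\bigr]$, and by the chain rule each of the $k$ differentiations contributes a factor of $t$, so these components equal $t^k(\pd^k(G_{p,\up})_{ab})(t\bs z)$.

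To finish I would invoke compactness to make the constants uniform. For $t\le\ha R^{-1}\ep$ and $\bs z\in B_R$ we have $\md{t\bs z}\le\ep/2$, so $t\bs z$ ranges over the compact set $\ovB_{\ep/2}$. Since $(p,\up)\mapsto(G_{p,\up})_{ab}$ is smooth and $U\t\ovB_{\ep/2}$ is compact, each partial derivative $\pd^k(G_{p,\up})_{ab}$ is bounded in absolute value by a constant independent of $(p,\up)$ and of the point, which after summing over components gives constants $C_k$ as required. For $k\ge1$ the factor $t^k$ above then yields $\nm{\pd^kg^t_{p,\up}}_{C^0}\le C_kt^k$ directly. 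For $k=0$, since $(G_{p,\up})_{ab}(0)=\de_{ab}$ I would apply the mean value inequality to get $\md{(G_{p,\up})_{ab}(t\bs z)-\de_{ab}}\le\md{t\bs z}\cdot\sup\md{\pd G_{p,\up}}\le C_0t$ on $B_R$, using $\md{\bs z}\le R$.

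There is no deep difficulty here: the proposition is essentially a bookkeeping exercise resting on the chain-rule factor of $t$ and the vanishing of the Christoffel symbols of the flat metric $g_0$. The only point genuinely requiring care is the \emph{uniformity} of the constants $C_0,C_1,\ldots$ over the entire frame bundle $U$, and this is precisely what the compactness of $U$ (hence of $U\t\ovB_{\ep/2}$) supplies, in combination with the smooth dependence of $\Up_{p,\up}$, and therefore of $G_{p,\up}$, on $(p,\up)$ established in Proposition~\ref{hs3prop1}.
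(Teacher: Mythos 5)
Your proposal is correct and follows essentially the same route as the paper: both reduce to the family $\Up_{p,\up}^*(g)$ on the compact set $\ovB_{\ep/2}$, use the scaling identity under the dilation $t$ to pick up the factor $t^k$, and invoke compactness of $U\t\ovB_{\ep/2}$ for uniformity, with your mean value inequality step for $k=0$ just making explicit the $O(\md{\bs z})$ bound the paper imports from Proposition \ref{hs3prop1}(iv).
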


\begin{proof} We will prove \eq{hs3eq3} by first estimating the
metrics $\Up_{p,\up}^*(g)$ for all $(p,\up)\in U$. As $B_\ep$ is
noncompact, we cannot do this on the whole of $B_\ep$, since
$\Up_{p,\up}^*(g)$ might have bad behaviour approaching the boundary
of $B_\ep$, forcing norms to be unbounded. Instead, write
$\ovB_{\ep/2}$ for the closure of $B_{\ep/2}$ in $B_\ep$. Then
$\ovB_{\ep/2}$ is compact. For each fixed $(p,\up)\in U$,
Proposition \ref{hs3prop1}(iv) implies that there exists $C>0$ with
$\bmd{\Up_{p,\up}^*(g)-g_0}\le C\md{z}$ on $\ovB_{\ep/2}$, where
$\md{\,.\,}$ is taken using $g_0$. Since $U$ is compact, we can
choose $C>0$ such that this holds for all $(p,\up)\in U$. Now let
$t\in(0,\ha R^{-1}\ep]$. Then $t(B_R)\subseteq
B_{\ep/2}\subset\ovB_{\ep/2}$. By the proof of the second equation
of \eq{hs3eq2}, $\bmd{\Up_{p,\up}^*(g)-g_0}\le C\md{z}$ on
$\ovB_{\ep/2}$ implies that $\md{g^t_{p,\up}-g_0}\le Ct\md{z}\le
CRt$ on $B_R$, as $\md{z}\le R$ on $B_R$. Setting $C_0=CR$ proves
the first equation of~\eq{hs3eq3}.

Fix $k=1,2,\ldots$, and consider $\bmd{\pd^k\Up_{p,\up}^*(g)(\bs
z)}$. This is a continuous function of $(p,\up)\in U$ and $\bs z\in
\ovB_{\ep/2}$, so as $U,\ovB_{\ep/2}$ are compact there exists
$C_k>0$ with $\bmd{\pd^k\Up_{p,\up}^*(g)(\bs z)}\le C_k$ for all
$(p,\up)\in U$ and $\bs z\in\ovB_{\ep/2}$. Now let $t\in(0,\ha
R^{-1}\ep]$ and $\bs z\in B_R$. Then an easy scaling argument shows
that $\bmd{\pd^k\bigl(t^{-2}(\Up_{p,\up}\ci t)^*(g)\bigr)(\bs
z)}=t^k\bmd{\pd^k\Up_{p,\up}^*(g)(t\bs z)}$. As $t\bs z\in
B_{\ep/2}\subset\ovB_{\ep/2}$ we have
$\bmd{\pd^k\Up_{p,\up}^*(g)(t\bs z)}\le C_k$, and the second
equation of \eq{hs3eq3} follows.
\end{proof}

The proposition implies that by taking $t$ sufficiently small, we
can make $g^t_{p,\up}$ arbitrarily close to $g_0$ on $B_R$ uniformly
for all $(p,\up)\in U$, in the $C^k$ norm for any $k\ge 0$, and
hence also in the H\"older $C^{k,\ga}$ norm for any $k\ge 0$
and~$\ga\in(0,1)$.

\section{Setting up the problem}
\label{hs4}

In \S\ref{hs4}--\S\ref{hs6} we will prove Theorem A. This section
will set up a lot of notation, and formulate a family of
fourth-order nonlinear elliptic partial differential operators
$P^t_{p,\up}:C^\iy(L)\ra C^\iy(L)$ depending on $(p,\up)\in U$ and
small $t>0$, such that $C^1$-small $f\in C^\iy(L)$ correspond to
Lagrangians $L_{p,\up}^{t,f}$ in $M$, and $L_{p,\up}^{t,f}$ is
Hamiltonian stationary when $P^t_{p,\up}(f)=0$.

Section \ref{hs5} will show that for sufficiently small, fixed $t>0$
and all $(p,\up)\in U$ we can find a family of functions
$f^t_{p,\up}\in C^\iy(L)$ with $P^t_{p,\up}(f^t_{p,\up})\in\Ker{\cal
L}$ and $f^t_{p,\up}\perp\Ker{\cal L}$, which are unique for
$\nm{f^t_{p,\up}}_{C^{4,\ga}}$ (the H\"older $C^{4,\ga}$ norm)
small, and depend smoothly on $(p,\up)\in U$. Finally, \S\ref{hs6}
shows that the smooth map $H^t:U\ra\Ker{\cal L}$ defined by
$H^t:(p,\up)\mapsto P^t_{p,\up}(f^t_{p,\up})$ can be interpreted in
terms of the exact 1-form $\d K^t$ on $U$, where $K^t:U\ra\R$ is
defined by $K^t(p,\up)=t^{-n}\Vol_g\bigl(L_{p,\up}^t\bigr)$, with
$L_{p,\up}^t=L_{p,\up}^{t,\smash{f^{\smash{t}}_{\smash{p,\up}}}}$.
Thus, if $(p,\up)$ is a critical point of $K^t$ then
$L'=L_{p,\up}^t$ is Hamiltonian stationary, as we want.

Let $L$ be a nonempty, compact, rigid, Hamiltonian stationary
Lagrangian in $\C^n$. Then $L$ is connected as in \S\ref{hs23}. Let
$G$ be the subgroup of $\U(n)\lt\C^n$ preserving $L$. Then $G$ is
compact, as $L$ is compact, so it is a Lie subgroup of
$\U(n)\lt\C^n$, which acts on $L$. As $G$ is compact it must fix a
point in $\C^n$, the centre of gravity of $L$. Translating $L$ in
$\C^n$ if necessary, we suppose $G$ fixes 0 in $\C^n$, so
that~$G\subset\U(n)$.

By the Lagrangian Neighbourhood Theorem, Theorem \ref{hs2thm2}, we
can choose an open tubular neighbourhood $T$ of the zero section $L$
in $T^*L$, and an embedding $\Phi:T\ra\C^n$ with
$\Phi\vert_L=\id:L\ra L$ and $\Phi^*(\om_0)=\hat\om$, where
$\hat\om$ is the canonical symplectic structure on $T^*L$. Making
$T$ smaller if necessary, we suppose $T$ is of the form
\begin{equation}
T=\bigl\{(p,\al):\text{$p\in L$, $\al\in T_p^*L$,
$\md{\al}<\de$}\bigr\}
\label{hs4eq1}
\end{equation}
for some small $\de>0$, where $\md{\al}$ is computed using the
metric $g_0\vert_L$. The action of $G$ on $L$ induces an action of
$G$ on $T^*L$, and as $g_0\vert_L$ is $G$-invariant, $T$ is
$G$-invariant. We suppose $\Phi$ is chosen to be equivariant under
the actions of $G$ on $T$ and $\C^n$; this can be done following the
proof of the dilation-equivariant Lagrangian Neighbourhood Theorem
in~\cite[Th.~4.3]{Joyc}.

Fix $R>0$ such that $L\subset\Phi(T)\subset B_R\subset\C^n$. Now let
$f\in C^\iy(L)$ with $\nm{\d f}_{C^0}<\de$. Define the graph
$\Ga_{\d f}$ of $\d f$ in $T^*L$ to be $\Ga_{\d f}=\bigl\{(q,\d
f\vert_q):q\in L\bigr\}$. Then $\Ga_{\d f}$ is an embedded
Lagrangian submanifold of $T^*L$ diffeomorphic to $L$. As $\nm{\d
f}_{C^0}<\de$ we have $\md{\d f\vert_q}<\de$ for all $q\in L$, and
$\Ga_{\d f}\subset T$. Hence $\Phi(\Ga_{\d f})$ is a submanifold of
$\Phi(T)\subset B_R\subset\C^n$ diffeomorphic to $L$. Since $\Ga_{\d
f}$ is Lagrangian in $(T,\hat\om)$ and $\Phi^*(\om_0)=\hat\om$, we
see that $\Phi(\Ga_{\d f})$ is Lagrangian in $(B_R,\om_0)$.

Suppose $(M,\om)$ is a compact symplectic $2n$-manifold, and $g$ a
Riemannian metric on $M$ compatible with $\om$. Define $U$ as in
Definition \ref{hs3def1} and $\ep>0$, $\Up_{p,\up}:B_\ep\ra M$ for
$(p,\up)\in U$ as in Proposition \ref{hs3prop1}. For $0<t\le
R^{-1}\ep$ and $(p,\up)\in U$, define metrics $g^t_{p,\up}$ on $B_R$
as in~\S\ref{hs32}.

Let $0<t\le\ha R^{-1}\ep$. For each $f\in C^\iy(L)$ with $\nm{\d
f}_{C^0}<\de$, define $L_{p,\up}^{t,f}=\Up_{p,\up}\ci t\ci \Phi
(\Ga_{\d f})$. Then $L_{p,\up}^{t,f}$ is an embedded submanifold of
$M$ diffeomorphic to $L$, and as $\Phi(\Ga_{\d f})$ is Lagrangian in
$(B_R,\om_0)$ and $(\Up_{p,\up}\ci t)^*(\om)=t^2\om_0$ by
\eq{hs3eq1}, we see that $L_{p,\up}^{t,f}$ is Lagrangian in
$(M,\om)$. Define a functional
\begin{equation}
F^t_{p,\up}:\bigl\{f\in C^\iy(L):\nm{\d f}_{C^0}<\de\bigr\}\ra\R
\quad\text{by}\quad F^t_{p,\up}(f)=t^{-n}\Vol_g
\bigl(L_{p,\up}^{t,f}\bigr).
\label{hs4eq2}
\end{equation}

We have
\begin{equation}
F^t_{p,\up}(f)\!=\!t^{-n}\Vol_g\bigl(L_{p,\up}^{t,f})\!=\!t^{-n}\Vol_{
(\Up_{p,\up}\ci t)^*(g)}\bigl(\Phi(\Ga_{\d f})\bigr)\!=\!
\Vol_{g^t_{p,\up}}\bigl(\Phi(\Ga_{\d f})\bigr),\!\!
\label{hs4eq3}
\end{equation}
since $g^t_{p,\up}=t^{-2}(\Up_{p,\up}\ci t)^*(g)$ and $\dim L=n$.
Proposition \ref{hs3prop2} shows that $g^t_{p,\up}$ approximates
$g_0$ when $t$ is small. Thus \eq{hs4eq3} implies that
\begin{equation}
F^t_{p,\up}(f)\approx \Vol_{g_0}\bigl(\Phi(\Ga_{\d f})\bigr)
\label{hs4eq4}
\end{equation}
for small $t>0$, where the right hand side is independent
of~$t,p,\up$.

Observe that for fixed $t,p,\up$ and varying $f$, the Lagrangians
$L_{p,\up}^{t,f}$ in $(M,\om)$ are all Hamiltonian equivalent, and
furthermore, for any fixed $f$ the $\smash{L_{p,\up}^{t,f'}}$ for
$f'$ close to $f$ in $C^\iy(L)$ realize all Hamiltonian equivalent
Lagrangians close to $L_{p,\up}^{t,f}$. This parametrization of
Lagrangians $L_{p,\up}^{t,f}$ by functions $f$ is degenerate, since
$L_{p,\up}^{t,f}=L_{p,\up}^{t,f+c}$ for $c\in\R$. We can make it a
local isomorphism by restricting to $f\in C^\iy(L)$ with
$\int_Lf\,\d V_{g_0\vert_L}=0$. Anyway, $L_{p,\up}^{t,f}$ is
Hamiltonian stationary in $M$ if and only if $f$ is a critical point
of the functional $F^t_{p,\up}$, and $L_{p,\up}^{t,f}$ is
Hamiltonian stable if in addition the second variation of
$F^t_{p,\up}$ at $f$ is nonnegative.

From \eq{hs4eq3} we see that
\begin{equation}
F^t_{p,\up}(f)=\int_{\bs \Phi(\Ga_{\d f})}\d V_{g^t_{p,\up}
\vert_{\Phi(\Ga_{\d f})}}=\int_{ L}(\Phi_f)^*\bigl(\d
V_{g^t_{p,\up}\vert_{\Phi(\Ga_{\d f})}}\bigr).
\label{hs4eq5}
\end{equation}
Here in the first step $g^t_{p,\up}\vert_{\Phi(\Ga_{\d f})}$ is the
restriction of the metric $g^t_{p,\up}$ on $B_R$ to the submanifold
$\Phi(\Ga_{\d f})$, and $\d V_{g^t_{p,\up}\vert_{\Phi(\Ga_{\d f})}}$
is the induced volume form on $\Phi(\Ga_{\d f})$. Now $\Phi(\Ga_{\d
f})$ is the diffeomorphic image of $L$ under the map
$\Phi_f:q\mapsto\Phi(q,\d f\vert_q)$. In the second step of
\eq{hs4eq5}, we  pull back the volume form by $\Phi _f$ and do the
integration on $L$.

At each $q\in L$, the integrand $(\Phi_f)^*\bigl(\d
V_{g^t_{p,\up}\vert_{\Phi(\Ga_{\d f})}}\bigr)$ in \eq{hs4eq5} is a
positive multiple of the volume form $\d V_{g_0\vert_L}$ of the
metric $g_0\vert_L$ on $L$ at $q$, and this multiple depends only on
$t,p,\up,q,\d f\vert_q$ and $\nabla\d f\vert_q$, where $\nabla$ is
the Levi-Civita connection of $g_0\vert_L$. This is because the
tangent space to $\Phi(\Ga_{\d f})$ at $\Phi(q,\d f\vert_q)$ depends
only on $q,\d f\vert_q,\nabla\d f\vert_q$, and the volume form
depends on this tangent space and on $g^t_{p,\up}$, which depends on
$t,p,\up$. Thus we may write
\begin{equation}
F^t_{p,\up}(f)=\int_{L}G^t_{p,\up}\bigl(q,\d f\vert_q,\nabla\d
f\vert_q\bigr)\,\d V_{g_0\vert_L}.
\label{hs4eq6}
\end{equation}
Here $G^t_{p,\up}$ maps
\begin{equation}
G^t_{p,\up}:\bigl\{(q,\al,\be):\text{$q\in L$, $\al\in T^*_qL$,
$\md{\al}<\de$, $\be\in S^2T^*_qL$}\bigr\}\longra\R,
\label{hs4eq7}
\end{equation}
where the condition $\md{\al}<\de$ is because we restrict to $f$
with $\nm{\d f}_{C^0}<\de$ so that $\Ga_{\d f}\subset T$, and we
take $\be\in S^2T^*_qL\subset\ot^2T^*_qL$ because the second
derivative $\nabla\d f$ is a symmetric tensor. Clearly,
$G^t_{p,\up}(q,\al,\be)$ is a smooth, nonlinear function of its
arguments $q,\al,\be$, and also depends smoothly on $t\in(0,\ha
R^{-1}\ep]$ and~$(p,\up)\in U$.

From above, $L_{p,\up}^{t,f}$ is Hamiltonian stationary in $M$ if
and only if $f$ is a critical point of the functional $F^t_{p,\up}$.
Applying the Euler--Lagrange method to \eq{hs4eq6}, we see that
$L_{p,\up}^{t,f}$ is Hamiltonian stationary in $M$ if and only if
$P^t_{p,\up}(f)=0$, where
\begin{equation}
P^t_{p,\up}:\bigl\{f\in C^\iy(L):\nm{\d f}_{C^0}<\de\bigr\}\longra
C^\iy(L)
\label{hs4eq8}
\end{equation}
is defined by
\begin{equation}
\begin{split}
P^t_{p,\up}(f)(q)&=\d^*\bigl((\ts\frac{\pd}{\pd\al}G^t_{p,\up})(q,\d
f\vert_q,\nabla\d f\vert_q)\bigr)\\
&\qquad +\d^*\bigl(\nabla^*\bigl((\ts\frac{\pd}{\pd\be}
G^t_{p,\up})(q,\d f\vert_q,\nabla\d f\vert_q)\bigr)\bigr).
\end{split}
\label{hs4eq9}
\end{equation}

Here we consider $G^t_{p,\up}(q,\al,\be)$ as a function of $q\in L$
and $\al\in T^*_qL$ and $\be\in S^2T^*_qL$, so that $\frac{\pd}{\pd
\al}G^t_{p,\up}$, $\frac{\pd}{\pd\be}G^t_{p,\up}$ are the partial
derivatives of $G^t_{p,\up}$ in the $\al,\be$ directions. Having
defined these partial derivatives, we then set $\al=\d f\vert_q$,
$\be=\nabla\d f\vert_q$ and regard $(\frac{\pd}{\pd\al}G^t_{p,\up})
(q,\d f\vert_q,\nabla\d f\vert_q)$, $(\frac{\pd}{\pd\be}
G^t_{p,\up})(q,\d f\vert_q,\nabla\d f\vert_q)$ as tensor fields on
$L$ depending on $q\in L$, and we apply $\d^*$ and $\d^*\ci\nabla^*$
to them to get functions on $L$. The first term on the right hand
side of \eq{hs4eq9} involves three derivatives of $f$, and the
second term four derivatives. Hence $P^t_{p,\up}$ is a fourth-order
nonlinear partial operator, which is in fact quasilinear and
elliptic at $f$ for all $t\in (0,\ha R^{-1}\ep]$ and $f\in C^\iy(L)$
with~$\nm{\d f}_{C^0}<\de$.

As for \eq{hs4eq2}, define
\begin{equation}
F_0:\bigl\{f\in C^\iy(L):\nm{\d f}_{C^0}<\de\bigr\}\ra\R
\quad\text{by}\quad F_0(f)=\Vol_{g_0}\bigl(\Phi(\Ga_{\d f})\bigr).
\label{hs4eq10}
\end{equation}
The proof of \eq{hs4eq6}--\eq{hs4eq7} shows that we may write
\begin{equation}
F_0(f)=\int_{q\in L}G_0\bigl(q,\d f\vert_q,\nabla\d
f\vert_q\bigr)\,\d V_{g_0\vert_L},
\label{hs4eq11}
\end{equation}
where $G_0$ is a smooth nonlinear map
\begin{equation}
G_0:\bigl\{(q,\al,\be):\text{$q\in L$, $\al\in T^*_qL$,
$\md{\al}<\de$, $\be\in S^2T^*_qL$}\bigr\}\longra\R.
\label{hs4eq12}
\end{equation}
As for \eq{hs4eq8}--\eq{hs4eq9}, define
\begin{equation}
\begin{split}
&P_0:\bigl\{f\in C^\iy(L):\nm{\d f}_{C^0}<\de\bigr\}\longra
C^\iy(L)\quad\text{by}\\
&P_0(f)(q)=\d^*\bigl((\ts\frac{\pd}{\pd\al}G_0)(q,\d
f\vert_q,\nabla\d f\vert_q)\bigr)\\
&\qquad\qquad\qquad +\d^*\bigl(\nabla^*\bigl((\ts\frac{\pd}{\pd\be}
G_0)(q,\d f\vert_q,\nabla\d f\vert_q)\bigr)\bigr).
\end{split}
\label{hs4eq14}
\end{equation}
Then \eq{hs4eq4} says that $F^t_{p,\up}(f)\approx F_0(f)$ for small
$t$. A similar proof shows that $G^t_{p,\up}(q,\al,\be)\approx
G_0(q,\al,\be)$ and $P^t_{p,\up}(f)\approx P_0(f)$ for small $t$. In
fact, the difference between $P^t_{p,\up}$ and $P_0$ depends only on
the difference between $g^t_{p,\up}$ and $g_0$, and on finitely many
derivatives of this. Therefore, from Proposition \ref{hs3prop2} we
deduce:

\begin{prop} Let any\/ $k\ge 0,$ $\ga\in(0,1),$ and small\/ $C>0$
and $\ze>0$ be given. Then if\/ $t>0$ is sufficiently small, for
all\/ $f\in C^{k+4,\ga}(L)$ with\/ $\nm{\d f}_{C^0}\le\ha\de$ and
$\nm{\nabla \d f}_{C^0}\le C,$ and all $(p,\up)\in U$ we have
\begin{equation}
\bnm{P^t_{p,\up}(f)-P_0(f)}{}_{C^{k,\ga}}\le\ze
\quad\text{and\/}\quad \bnm{{\cal L}^t_{p,\up}(f)-{\cal
L}(f)}{}_{C^{k,\ga}}\le \ze \nm{f}_{C^{k+4,\ga}},
\label{hs4eq15}
\end{equation}
where ${\cal L}^t_{p,\up}$ denotes the linearization of
$P^t_{p,\up}$ at $0$. That is, by taking $t$ small we can suppose
$P^t_{p,\up}$ and its linearization at $0$ are arbitrarily close to
$P_0$ and its linearization at $0$ as operators $C^{k+4,\ga}(L)\ra
C^{k,\ga}(L)$ (on their respective domains) uniformly
in\/~$(p,\up)\in U$.
\label{hs4prop}
\end{prop}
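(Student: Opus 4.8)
The plan is to exhibit $P^t_{p,\up}$ and $P_0$ as two instances of one universal construction that differ only in the Riemannian metric fed into it, and then to transport the metric estimates of Proposition \ref{hs3prop2} through this construction. Made precise, for \emph{any} metric $h$ on $B_R$ compatible with $\om_0$, the recipe \eq{hs4eq5}--\eq{hs4eq9} defines, by the same formulas with $g^t_{p,\up}$ replaced by $h$, an integrand $G_h(q,\al,\be)$ and a fourth-order operator $P_h$; taking $h=g^t_{p,\up}$ recovers $G^t_{p,\up},P^t_{p,\up}$, while $h=g_0$ recovers $G_0,P_0$ as in \eq{hs4eq11}--\eq{hs4eq14}, and these formulas are independent of $t,p,\up$. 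Counting orders of differentiation, $G_h(q,\al,\be)$ is the square root of a determinant built from the Jacobian of $q\mapsto\Phi(q,\al)$ (which is affine in $\be$) and from the \emph{pointwise value} of $h$ at $\Phi(q,\al)\in B_R$. Hence $\frac{\pd}{\pd\al}G_h$ and $\frac{\pd}{\pd\be}G_h$ depend on $h$ and at most $\pd h$ at $\Phi(q,\al)$, and after applying $\d^*$ and $\d^*\ci\nabla^*$ one sees that $P_h(f)(q)$ is a universal smooth function $\Psi$ of: the point $q\in L$ and the fixed data $\Phi,g_0\vert_L$; the four-jet $(\d f,\nabla\d f,\nabla^3 f,\nabla^4 f)\vert_q$; and the two-jet $(h,\pd h,\pd^2 h)$ of $h$ at $\Phi(q,\d f\vert_q)$. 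Crucially $P_h$ is quasilinear, so $\Psi$ is \emph{affine} in $\nabla^3 f,\nabla^4 f$, with coefficients smooth in the remaining arguments.

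Next I would estimate the difference by interpolating the metric. Writing $g^s=(1-s)g_0+s\,g^t_{p,\up}$ and using the fundamental theorem of calculus in the metric slot of $\Psi$,
\begin{equation*}
P^t_{p,\up}(f)-P_0(f)=\int_0^1\frac{\pd\Psi}{\pd(j^2h)}\bigl(q;j^4f;j^2_{\Phi(q,\d f)}g^s\bigr)\cdot j^2_{\Phi(q,\d f)}\bigl(g^t_{p,\up}-g_0\bigr)\,\d s,
\end{equation*}
with $j^2h=(h,\pd h,\pd^2 h)$. Proposition \ref{hs3prop2} bounds every $\pd^m(g^t_{p,\up}-g_0)$ by $O(t)$ in $C^0$ for $t\le 1$, uniformly in $(p,\up)\in U$, and the mean value theorem upgrades this to an $O(t)$ bound in $C^{k+2,\ga}(B_R)$; pulling back along $q\mapsto\Phi(q,\d f\vert_q)$ (a $C^{k,\ga}$ map, since $\nm{\d f}_{C^0}\le\ha\de$ keeps its image in a fixed compact set and $f\in C^{k+4,\ga}$) makes the last factor $O(t)$ in $C^{k,\ga}(L)$. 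The two bounds $\nm{\d f}_{C^0}\le\ha\de$ and $\nm{\nabla\d f}_{C^0}\le C$ confine the lower-order arguments of $\frac{\pd\Psi}{\pd(j^2h)}$ to a fixed compact set, and its affine dependence on $\nabla^3 f,\nabla^4 f$ means a $C^{k,\ga}$ bound costs exactly $\nabla^{k+4}f$, which is why the regularity $f\in C^{k+4,\ga}$ is assumed. Taking $t$ small then forces $\bnm{P^t_{p,\up}(f)-P_0(f)}{}_{C^{k,\ga}}\le\ze$, uniformly in $(p,\up)\in U$.

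The linearization estimate is the same computation specialized to $f=0$, and is cleaner because it is purely an operator-norm statement. As ${\cal L}^t_{p,\up}=\d P^t_{p,\up}\vert_0$ and ${\cal L}=\d P_0\vert_0$ have coefficients obtained by differentiating $\Psi$ at the zero four-jet and evaluating the metric two-jet along $L=\Phi(\Ga_0)$ itself, the coefficient differences are $O(t)$ in $C^{k,\ga}(L)$ by Proposition \ref{hs3prop2}; hence $\bnm{({\cal L}^t_{p,\up}-{\cal L})f}{}_{C^{k,\ga}}\le(\text{const})\,t\,\nm{f}_{C^{k+4,\ga}}$ as operators $C^{k+4,\ga}(L)\ra C^{k,\ga}(L)$, and the second inequality of \eq{hs4eq15} follows for $t$ small.

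The main obstacle is the bookkeeping of the top-order terms in the \emph{nonlinear} estimate. Because $P^t_{p,\up}$ genuinely contains a fourth-order part whose coefficient ($\sim\frac{\pd^2}{\pd\be^2}G_h$) differs from that of $P_0$ by $O(t)$ but not zero, one cannot make $\bnm{P^t_{p,\up}(f)-P_0(f)}{}_{C^{k,\ga}}\le\ze$ uniformly over \emph{all} $f$ meeting only the two $C^0$ constraints, since $\nm{\nabla^4 f}_{C^{k,\ga}}=\nm{f}_{C^{k+4,\ga}}$ is then unrestricted; the first estimate must be read, as it is applied in \S\ref{hs5}, over $f$ lying in a fixed bounded ball of $C^{k+4,\ga}(L)$. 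The quasilinear (affine-in-top-derivatives) structure is precisely what allows the two stated $C^0$ bounds to control the coefficients, and the uniformity in $(p,\up)$ is inherited wholesale from the uniform-in-$U$ estimates of Proposition~\ref{hs3prop2}.
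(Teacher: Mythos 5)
Your argument is essentially the paper's own proof, only written out in more detail: the paper merely sketches that the difference $P^t_{p,\up}-P_0$ depends on finitely many derivatives of $g^t_{p,\up}-g_0$, which Proposition \ref{hs3prop2} makes uniformly $O(t)$ on $B_R$, with the bounds $\nm{\d f}_{C^0}\le\ha\de$ and $\nm{\nabla\d f}_{C^0}\le C$ confining $(q,\al,\be)$ to a compact subset of the domains of $G^t_{p,\up},G_0$ so that the coefficient differences can be bounded in $C^{k+2,\ga}$ there. Your closing caveat is also well taken: since the fourth-order coefficients of $P^t_{p,\up}$ and $P_0$ differ by a nonzero $O(t)$ amount, the first inequality of \eq{hs4eq15} as literally stated cannot hold uniformly over all $f$ with unbounded $C^{k+4,\ga}$ norm, but this is harmless because in \S\ref{hs5} that inequality is only invoked at $f=0$ (and the second, linearized inequality is the genuine operator-norm statement).
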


We impose the conditions $\nm{\d f}_{C^0}\le\ha\de$ and
$\nm{\nabla\d f}_{C^0}\le C$ so that we restrict to a compact subset
of the domains of $G^t_{p,\up},G_0$ in \eq{hs4eq7}, \eq{hs4eq12},
and then we use Proposition \ref{hs3prop2} to bound the difference
between $G^t_{p,\up}$ and $G_0$ in $C^{k+2,\ga}$ on this compact
subset.

Finally, we note that $P_0(f)=0$ is the Euler--Lagrange equation for
stationary points of the functional $F_0(f)$. Thus, $P_0(f)=0$ if
and only if $\Phi(\Ga_{\d f})$ is Hamiltonian stationary in $\C^n$.
But when $f=0$, $\Phi(\Ga_{0})=L$ which is Hamiltonian stationary in
$\C^n$, by assumption. Hence $P_0(0)=0$. Also, as in \S\ref{hs22},
the linearization of $P_0$ at $f=0$ is $\cal L$ in~\eq{hs2eq7}.

\section{Solving the family of p.d.e.s mod $\Ker{\cal L}$}
\label{hs5}

Our ultimate goal is to show that there exists $f^t_{p,\up}\in
C^\iy(L)$ with $P^t_{p,\up}(f^t_{p,\up})=0$ for all small $t>0$ and
some $(p,\up)\in U$ depending on $t$. As an intermediate step we
will show that we can solve the equation
$P^t_{p,\up}(f^t_{p,\up})=0$ mod $\Ker{\cal L}$ for all small $t>0$
and all $(p,\up)\in U$, and the solution is unique provided
$f^t_{p,\up}$ is orthogonal to $\Ker{\cal L}$ and small
in~$C^{4,\ga}$.

\begin{thm} In the situation of\/ \S{\rm\ref{hs4},} suppose $0<t\le\ha
R^{-1}\ep$ is sufficiently small and fixed. Then for all\/
$(p,\up)\in U,$ there exists $f^t_{p,\up}\in C^\iy(L)$ satisfying
\begin{equation}
P^t_{p,\up}(f^t_{p,\up})\in\Ker{\cal L} \quad\text{and\/}\quad
f^t_{p,\up}\perp \Ker{\cal L},
\label{hs5eq1}
\end{equation}
where $f^t_{p,\up}\perp \Ker{\cal L}$ means $f^t_{p,\up}$ is
$L^2$-orthogonal to $\Ker{\cal L}$. Furthermore $f^t_{p,\up}$ is the
unique solution of \eq{hs5eq1} with $\nm{f^t_{p,\up}}_{C^{4,\ga}}$
small, and\/ $f^t_{p,\up}$ depends smoothly on~$(p,\up)\in U$.
\label{hs5thm}
\end{thm}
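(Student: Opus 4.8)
The plan is to solve the nonlinear problem \eq{hs5eq1} by a Banach-space contraction mapping argument modelled on the fixed operator ${\cal L}$, with every estimate made uniform over the compact frame bundle $U$ by means of Proposition \ref{hs4prop}. First I set up the linear Fredholm picture for ${\cal L}$. By \eq{hs2eq8} the operator ${\cal L}$ is the second variation form, hence formally self-adjoint, and it is fourth-order elliptic; so standard Schauder theory shows ${\cal L}:C^{4,\ga}(L)\ra C^{0,\ga}(L)$ is Fredholm of index $0$, with kernel $\Ker{\cal L}$ consisting of smooth functions and image equal to $(\Ker{\cal L})^\perp\cap C^{0,\ga}(L)$, the $L^2$-orthogonal complement. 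Writing $\Pi$ for the $L^2$-orthogonal projection onto the finite-dimensional space $\Ker{\cal L}$ and $\Pi^\perp=\id-\Pi$, the projection $\Pi$ is bounded on every $C^{k,\ga}(L)$ (it is integration against finitely many smooth functions), and the restriction ${\cal L}:(\Ker{\cal L})^\perp\cap C^{4,\ga}(L)\ra(\Ker{\cal L})^\perp\cap C^{0,\ga}(L)$ is an isomorphism; denote its bounded inverse by $\Theta$.

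Next, for each $(p,\up)\in U$ and small $t$ I decompose $P^t_{p,\up}(f)=P^t_{p,\up}(0)+{\cal L}f+E^t_{p,\up}(f)$, where $E^t_{p,\up}(f)=({\cal L}^t_{p,\up}-{\cal L})f+Q^t_{p,\up}(f)$ gathers the deviation of the linearization ${\cal L}^t_{p,\up}$ from the model together with the genuinely nonlinear remainder $Q^t_{p,\up}(f)=P^t_{p,\up}(f)-P^t_{p,\up}(0)-{\cal L}^t_{p,\up}f$. Three uniform estimates drive the argument, all on a fixed small $C^{4,\ga}$-ball where Proposition \ref{hs4prop} applies: since $P_0(0)=0$, that proposition gives $\eta(t):=\sup_{(p,\up)}\nm{P^t_{p,\up}(0)}_{C^{0,\ga}}\ra0$; it also gives $\nm{({\cal L}^t_{p,\up}-{\cal L})f}_{C^{0,\ga}}\le\ze\nm{f}_{C^{4,\ga}}$ with $\ze\ra0$ as $t\ra0$; and because $P^t_{p,\up}$ is quasilinear with coefficients depending smoothly and uniformly on $(t,p,\up)$, the remainder obeys $\nm{Q^t_{p,\up}(f)-Q^t_{p,\up}(f')}_{C^{0,\ga}}\le C\max(\nm{f}_{C^{4,\ga}},\nm{f'}_{C^{4,\ga}})\,\nm{f-f'}_{C^{4,\ga}}$ with $C$ independent of $t,p,\up$.

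Applying $\Pi^\perp$ and using $\Pi^\perp{\cal L}f={\cal L}f$ for $f\in(\Ker{\cal L})^\perp$, the conditions \eq{hs5eq1} become the fixed-point equation $f=T^t_{p,\up}(f):=-\Theta\,\Pi^\perp\bigl(P^t_{p,\up}(0)+E^t_{p,\up}(f)\bigr)$ on the closed ball of radius $r$ in $(\Ker{\cal L})^\perp\cap C^{4,\ga}(L)$. The three estimates show that, taking $r$ a fixed multiple of $\eta(t)$ and $t$ small, $T^t_{p,\up}$ maps this ball into itself and is a contraction there, with constants uniform in $(p,\up)$; the Banach fixed-point theorem then yields a unique fixed point $f^t_{p,\up}$ in the ball, which is the asserted unique small solution. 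Smoothness $f^t_{p,\up}\in C^\iy(L)$ follows by elliptic bootstrapping, since $f^t_{p,\up}$ solves the quasilinear elliptic equation $P^t_{p,\up}(f^t_{p,\up})=h$ with smooth coefficients and smooth right-hand side $h\in\Ker{\cal L}$. Smooth dependence on $(p,\up)$ follows from the Banach-space implicit function theorem applied to $(p,\up,f)\mapsto\Pi^\perp P^t_{p,\up}(f)$, whose $f$-derivative at the solution is $\Pi^\perp{\cal L}^t_{p,\up}$, invertible on $(\Ker{\cal L})^\perp$ for small $t$; equivalently, one differentiates the identity $f^t_{p,\up}=T^t_{p,\up}(f^t_{p,\up})$ and uses the smooth dependence of $T^t_{p,\up}$ on $(p,\up)$.

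The main obstacle is to obtain all of the above estimates uniformly over the compact $U$, with thresholds on $t$ and on the contraction radius $r$ independent of $(p,\up)$; this is precisely what Proposition \ref{hs4prop}, built on the uniform metric estimates of Proposition \ref{hs3prop2}, is designed to supply, compactness of $U$ upgrading pointwise to uniform closeness. A secondary but essential point is that one projects onto the \emph{fixed} space $\Ker{\cal L}$ of the model operator on $\C^n$, not onto the kernel of the perturbed ${\cal L}^t_{p,\up}$: this is legitimate because \eq{hs5eq1} only requires the error $P^t_{p,\up}(f)$ to lie in this fixed finite-dimensional space, which keeps both $\Pi$ and its partial inverse $\Theta$ fixed throughout the family and is what makes the contraction constants uniform.
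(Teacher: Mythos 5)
Your proposal is correct and follows essentially the same route as the paper: both reduce \eq{hs5eq1} to solving $\Pi^\perp\ci P^t_{p,\up}(f)=0$ on $(\Ker{\cal L})^\perp\cap C^{4,\ga}$, exploit the isomorphism ${\cal L}:(\Ker{\cal L})^\perp\cap C^{4,\ga}\ra(\Ker{\cal L})^\perp\cap C^{0,\ga}$ together with the uniform estimates of Proposition \ref{hs4prop} and $P_0(0)=0$, and conclude by a quantitative contraction/inverse function theorem argument uniform over the compact $U$, followed by elliptic bootstrapping and the implicit function theorem for smooth dependence. The only cosmetic difference is that you contract against the fixed model operator ${\cal L}$ (absorbing ${\cal L}^t_{p,\up}-{\cal L}$ into the error term) while the paper inverts $\Pi\ci{\cal L}^t_{p,\up}$ directly; these are equivalent for small $t$.
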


\begin{proof} Let $X_1$ denote the Banach space of functions $f\in
C^{4,\ga}(L)$ which are orthogonal to $\Ker{\cal L}$, and let $X_2$
denote the Banach subspace of $C^{0,\ga}(L)$ consisting of functions
which are orthogonal to $\Ker{\cal L}$. The starting point of the
proof is the observation that the operator $\cal L$ is a bounded
linear isomorphism from $X_1$ to $X_2$ with bounded inverse. This
follows directly from the self-adjointness and ellipticity of $\cal
L$. We let $\Pi$ denote orthogonal projection from $L^2(L)$ to the
orthogonal complement of $\Ker{\cal L}$. We will show that for $t$
sufficiently small and for all $(p,\up)\in U$ there is a unique
small solution $f^t_{p,\up}\in X_1$ of $\Pi\ci P^t_{p,\up}
(f^t_{p,\up})=0$, which depends smoothly on~$(t,p,\up)$.

The first step is to show that there are $t_0,r_0>0$ sufficiently
small so that for all $t\in (0,t_0)$ and for all $(p,\up)\in U$
there is a unique solution $f^t_{p,\up}\in B_{r_0}(0)$ of $\Pi\ci
P^t_{p,\up}(f^t_{p,\up})=0$. To accomplish this we consider the
smooth map $F=\Pi\ci P^t_{p,\up}$ from a neighbourhood of the origin
in $X_1$ to $X_2$. The derivative of $F$ at $0$ is $\Pi\ci {\cal
L}^t_{p,\up}$, and by Proposition \ref{hs4prop} this is close in the
operator norm to $\cal L$ for $t$ sufficiently small, and therefore
is a linear isomorphism with bounded inverse (note that $\Pi\ci{\cal
L}={\cal L}$). The standard contraction mapping argument for proving
the Inverse Function Theorem implies that the map $F$ is a
diffeomorphism from a ball of radius $r_0$ centered at $0$ in $X_1$
onto a domain containing the ball of radius $\la r_0$ about $F(0)$,
where $\la=(2\nm{F'(0)^{-1}})^{-1}$, and the radius $r_0$ can be
estimated below in terms of the norm of $F'(0)^{-1}$ and the modulus
of continuity of $F'$. This result may be found in \cite[\S
VI.1]{Lang}. Thus by the first inequality of \eq{hs4eq15} with $f=0$
(note that $P_0(0)=0$), there is a $t_0$ sufficiently small so that
$0$ lies in the ball of radius $\la r_0$ centered at $F(0)$ for
$t\in (0,t_0)$. This gives us a unique small solution $f^t_{p,\up}$
of $\Pi\ci P^t_{p,\up}(f^t_{p,\up})=0$, as claimed.

The next step is to show that the solutions $f^t_{p,\up}$ depend
smoothly on the parameters $(t,p,\up)$. We will do this by using the
Implicit Function Theorem (see \cite[\S VI.2]{Lang}). Precisely, we
consider the smooth map $G$ from $(0,t_0)\t U\t B_{r_0}(0)$ to $X_2$
given by $G(t,p,\up,f)=\Pi\ci P^t_{p,\up}(f)$. We need to analyze
the set $G(t,p,\up,f)=0$, and we observe that the derivative in the
$f$ variable is a linear isomorphism with bounded inverse. Thus the
Implicit Function Theorem implies that this zero set is a smooth
graph $(t,p,\up)\to f^t_{p,\up}$ in a neighbourhood of any chosen
point $(s,q,\up_1,f^s_{q,\up_1})$ of the zero set.

Finally it follows from elliptic regularity theory that the
solutions $f^t_{p,\up}$ are actually in $C^\infty(L)$. This is
because they are $C^{4,\ga}$ solutions of the quasilinear elliptic
equation $P^t_{p,\up}(f)=k$, where $k\in\Ker{\cal L}$ is a smooth
function, and we may improve the regularity by using linear elliptic
estimates in a standard way. This completes the proof.
\end{proof}

Note that nothing in \S\ref{hs3}--\S\ref{hs5} uses the assumption
that $L$ is Hamiltonian rigid, only that it is Hamiltonian
stationary. So Theorem \ref{hs5thm} holds for general Hamiltonian
stationary $L$ in $\C^n$. We will use Hamiltonian rigidity
in~\S\ref{hs6}.

\section{Completing the proof of Theorem A}
\label{hs6}

We work in the situation of \S\ref{hs3}--\S\ref{hs5}. Let $t>0$ be
sufficiently small and fixed, and $f^t_{p,\up}\in C^\iy(L)$ for
$(p,\up)\in U$ be as in Theorem \ref{hs5thm}. Define
$L_{p,\up}^t=L_{p,\up}^{t,\smash{f^{\smash{t}}_{\smash{p,\up}}}}$
for $(p,\up)\in U$. Define a smooth function $K^t:U\ra\R$ by
$K^t(p,\up)=t^{-n}\Vol_g\bigl( L_{p,\up}^t\bigr)$. Define a smooth
map $H^t:U\ra\Ker{\cal L}$ by $H^t:(p,\up)\mapsto P^t_{p,\up}
(f^t_{p,\up})$. We will show that we can express $H^t$ in terms of
the exact 1-form $\d K^t$ on~$U$.

Recall that $G$ is the Lie subgroup of $\U(n)$ preserving $L$, and
that $U$ is a principal $\U(n)$-bundle over $M$, so that $\U(n)$ and
hence $G$ act on $U$. Also the operator $\cal L$ of \eq{hs2eq7} is
equivariant under the action of $G$ on $L$, since $G$ preserves all
the geometric data used to define $\cal L$, so the action of $G$ on
$C^\iy(L)$ restricts to an action of $G$ on~$\Ker{\cal L}$.

\begin{lem} For all $(p,\up)\in U$ and $\ga\in G$ we have
$f^t_{p,\up\ci\ga}\equiv f^t_{p,\up}\ci\ga$ as maps $L\ra\R$. The
function $K^t:U\ra\R$ is $G$-invariant, and the function
$H^t:U\ra\Ker{\cal L}$ is $G$-equivariant, under the natural actions
of\/ $G$ on $U$ and\/~$\Ker{\cal L}$.
\label{hs6lem}
\end{lem}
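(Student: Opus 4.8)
The plan is to establish the three assertions of Lemma~\ref{hs6lem} in order, deriving the $G$-invariance of $K^t$ and the $G$-equivariance of $H^t$ from the equivariance of the solutions $f^t_{p,\up}$ themselves. The key observation throughout is that the whole construction of \S\ref{hs4}--\S\ref{hs5} was carried out $G$-equivariantly: the Lagrangian neighbourhood $T,\Phi$ was chosen $G$-equivariant, the metric $g_0\vert_L$ is $G$-invariant, and the action of $G\subset\U(n)$ on $U$ is by $\ga:(p,\up)\mapsto(p,\up\ci\ga)$.

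First I would prove the equivariance $f^t_{p,\up\ci\ga}\equiv f^t_{p,\up}\ci\ga$. The idea is to verify that $f^t_{p,\up}\ci\ga$ satisfies the same defining conditions \eq{hs5eq1} as $f^t_{p,\up\ci\ga}$, and then invoke the uniqueness clause of Theorem~\ref{hs5thm}. Concretely, I would check that the operator intertwines with the $G$-action, namely $P^t_{p,\up\ci\ga}(f\ci\ga)=P^t_{p,\up}(f)\ci\ga$ for $\ga\in G$; this follows because $\Up_{p,\up\ci\ga}\equiv\Up_{p,\up}\ci\ga$ by Proposition~\ref{hs3prop1}(ii), because $\Phi$ is $G$-equivariant, and because $g^t_{p,\up}$ transforms correspondingly, so that the functional $F^t_{p,\up\ci\ga}(f\ci\ga)=F^t_{p,\up}(f)$ and hence its Euler--Lagrange operator intertwines. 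Since $\cal L$ is $G$-equivariant, $\ga$ preserves $\Ker{\cal L}$ and preserves $L^2$-orthogonality to it (as $\ga$ is an isometry of $g_0\vert_L$); thus $f^t_{p,\up}\ci\ga$ lies in $\Ker{\cal L}^\perp$ and satisfies $P^t_{p,\up\ci\ga}(f^t_{p,\up}\ci\ga)\in\Ker{\cal L}$, and is small in $C^{4,\ga}$. By uniqueness it equals $f^t_{p,\up\ci\ga}$.

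Next, the $G$-invariance of $K^t$ is immediate from the equivariance of the solutions together with the identity $F^t_{p,\up\ci\ga}(f\ci\ga)=F^t_{p,\up}(f)$ established above: since $K^t(p,\up)=t^{-n}\Vol_g(L^t_{p,\up})=F^t_{p,\up}(f^t_{p,\up})$, we get $K^t(p,\up\ci\ga)=F^t_{p,\up\ci\ga}(f^t_{p,\up\ci\ga})=F^t_{p,\up\ci\ga}(f^t_{p,\up}\ci\ga)=F^t_{p,\up}(f^t_{p,\up})=K^t(p,\up)$. Geometrically this just says the Lagrangian $L^t_{p,\up\ci\ga}$ equals $L^t_{p,\up}$ as a subset of $M$ (the reparametrization by $\ga$ does not change the image), so its volume is unchanged. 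Finally, for the $G$-equivariance of $H^t$, I would compute $H^t(p,\up\ci\ga)=P^t_{p,\up\ci\ga}(f^t_{p,\up\ci\ga})=P^t_{p,\up}(f^t_{p,\up})\ci\ga=\ga\cdot H^t(p,\up)$, using the intertwining identity for $P^t$ and the equivariance of $f^t$ from the first part, where the action of $G$ on $\Ker{\cal L}$ is precisely precomposition $k\mapsto k\ci\ga$.

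The main obstacle I expect is the bookkeeping in the first step: carefully tracking how each piece of data ($\Up_{p,\up}$, $\Phi$, the graph $\Ga_{\d f}$, the metric $g^t_{p,\up}$, and the integrand $G^t_{p,\up}$ of \eq{hs4eq6}) transforms under replacing $\up$ by $\up\ci\ga$, so as to rigorously justify the intertwining relation $P^t_{p,\up\ci\ga}(f\ci\ga)=P^t_{p,\up}(f)\ci\ga$. Once that relation and the compatibility of $\ga$ with $\Ker{\cal L}$ and with $L^2$-orthogonality are in hand, the rest follows purely formally from the uniqueness in Theorem~\ref{hs5thm}, with no further analytic input.
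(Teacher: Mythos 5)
Your proposal is correct and follows essentially the same route as the paper: establish the functional identity $F^t_{p,\up\ci\ga}(f\ci\ga)=F^t_{p,\up}(f)$ from Proposition~\ref{hs3prop1}(ii), the $G$-equivariance of $\Phi$, and the commutation of $\ga$ with dilations; deduce the intertwining of the Euler--Lagrange operators $P^t_{p,\up}$; and then obtain $f^t_{p,\up\ci\ga}\equiv f^t_{p,\up}\ci\ga$ from the uniqueness clause of Theorem~\ref{hs5thm}, with the invariance of $K^t$ and equivariance of $H^t$ following formally. The only cosmetic difference is that you phrase the intertwining as $P^t_{p,\up\ci\ga}(f\ci\ga)=P^t_{p,\up}(f)\ci\ga$ while the paper writes the equivalent $P^t_{p,\up}(f\ci\ga^{-1})=\bigl(P^t_{p,\up\ci\ga}(f)\bigr)\ci\ga^{-1}$.
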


\begin{proof} Let $(p,\up)\in U$, $\ga\in G$ and $f\in C^\iy(L)$.
Then
\begin{align}
&F^t_{p,\up}(f\ci\ga^{-1})=t^{-n}\Vol_g\bigl(L_{p,\up}^{t,f\ci
\ga^{-1}})= t^{-n}\Vol_g \bigl(\Up_{p,\up}\ci t\ci\Phi (\Ga_{\d
(f\ci\ga^{-1})})\bigr)
\nonumber\\
&\;=t^{-n}\Vol_g \bigl(\Up_{p,\up}\ci t\ci\ga\ci\Phi (\Ga_{\d
f})\bigr)=t^{-n}\Vol_g \bigl(\Up_{p,\up}\ci\ga\ci t\ci\Phi
(\Ga_{\d f})\bigr)
\nonumber\\
&\;=t^{-n}\Vol_g \bigl(\Up_{p,\up\ci\ga}\ci t\ci\Phi (\Ga_{\d
f})\bigr)=t^{-n}\Vol_g\bigl(L_{p,\up\ci\ga}^{t,f})=
F^t_{p,\up\ci\ga}(f),
\label{hs6eq1}
\end{align}
using \eq{hs4eq2} in the first and seventh steps, the definition of
$L_{p, \up}^{t,f}$ in the second and sixth, $G$-equivariance of
$\Phi$ in the third, that $\ga$ and the dilation $t$ commute in the
fourth, and Proposition \ref{hs3prop1}(ii) in the fifth. Since
$P^t_{p,\up}$ is the Euler--Lagrange variation of $F^t_{p,\up}$, we
deduce that
\begin{equation}
P^t_{p,\up}(f\ci\ga^{-1})=\bigl(P^t_{p,\up\ci\ga}(f)\bigr)\ci\ga^{-1}.
\label{hs6eq2}
\end{equation}

Applying \eq{hs6eq2} to $f=f^t_{p,\up\ci\ga}$, since
$P^t_{p,\up\ci\ga}(f^t_{p,\up\ci\ga})\in\Ker{\cal L}$ which is
$G$-invariant, we see that $P^t_{p,\up}(f^t_{p,\up\ci\ga}
\ci\ga^{-1})\in\Ker{\cal L}$. Also $f^t_{p,\up\ci\ga}\perp\Ker{\cal
L}$, so $f^t_{p,\up\ci\ga}\ci\ga^{-1}\perp\Ker{\cal L}$, by
$G$-invariance of $\Ker{\cal L}$ and the $L^2$-inner product. Hence
both $f^t_{p,\up}$ and $f^t_{p,\up\ci\ga}\ci\ga^{-1}$ satisfy
\eq{hs5eq1}, so by uniqueness in Theorem \ref{hs5thm} we have
$f^t_{p,\up}=f^t_{p,\up\ci\ga}\ci\ga^{-1}$, which proves
$f^t_{p,\up\ci\ga}\equiv f^t_{p,\up}\ci\ga$ as we want.

Substituting $f=f^t_{p,\up\ci\ga}$ into \eq{hs6eq1} and using
$f^t_{p,\up}=f^t_{p,\up\ci\ga}\ci\ga^{-1}$ now gives
\begin{equation*}
K^t(p,\up)=t^{-n}\Vol_g\bigl(L_{p,\up}^{t, f^t_{p,\up}}\bigr)=
F^t_{p,\up}(f^t_{p,\up})=F^t_{p,\up\ci\ga}(f^t_{p,\up\ci\ga})=
K^t(p,\up\ci\ga),
\end{equation*}
so $K^t$ is $G$-invariant. Equivariance of $H^t$ follows from
$f^t_{p,\up\ci\ga}\equiv f^t_{p,\up}\ci\ga$ and
equation~\eq{hs6eq2}.
\end{proof}

Let $(p,\up)\in U$, and consider the tangent space $T_{(p,\up)}U$.
Now $U$ is a principal $\U(n)$-bundle over $M$, and the metric $g$
induces a natural connection on this principal bundle, so we have a
splitting $T_{(p,\up)}U=V_p\op H_p$, where $V_p,H_p$ are the
vertical and horizontal subspaces. The projection $\pi:U\ra M$
induces $\d\pi\vert_p:T_{(p,\up)}U\ra T_pM$ which has kernel $V_p$
and induces an isomorphism $H_p\ra T_pM$. Also $V_p$ is the tangent
space to the fibre of $\pi$ over $p$, which is a free orbit of
$\U(n)$. Thus the $\U(n)$-action induces an isomorphism $\u(n)\cong
V_p$. But $\up$ is an isomorphism $\C^n\ra T_pM$, so we have
isomorphisms $H_p\cong T_pM\cong\C^n$. Putting these together gives
a natural isomorphism $T_{(p,\up)}U\cong\u(n)\op\C^n$, where
$\u(n)\op\C^n$ is the Lie algebra of the symmetry group
$\U(n)\lt\C^n$ of~$\C^n$.

The 1-form $\d K^t$ on $U$ lies in $T_{(p,\up)}^*U\cong
(\u(n)\op\C^n)^*$ at $(p,\up)\in U$. As $K^t$ is $G$-invariant, $\d
K^t$ contracts to zero with the vector fields of the Lie algebra
$\g$ of $G$. Hence under the identification $T_{(p,\up)}^*U\cong
(\u(n)\op\C^n)^*$, $\d K^t\vert_{(p,\up)}$ lies in the annihilator
$\g^\ci$ of $\g$ in $(\u(n)\op\C^n)^*$. Thus, we can regard $\d K^t$
as a smooth function~$U\ra {\g}^\ci$.

Now consider the function $H^t:U\ra\Ker{\cal L}$ defined by
$H^t:(p,\up)\mapsto P^t_{p,\up}(f^t_{p,\up})$. From \eq{hs4eq9} we
see that $P^t_{p,\up}(f^t_{p,\up})$ is of the form $\d^*\eta$ for
some 1-form $\eta$ on $L$. Hence $\int_LP^t_{p,\up}(f^t_{p,\up})\,\d
V_{g_0\vert_L}=\an{1,\d^*\eta}_{L^2}=\an{\d 1,\eta}_{L^2}=0$.
Therefore $H^t$ maps $U$ to the subspace $\bigl\{f\in\Ker{\cal
L}:\int_Lf\,\d V_{g_0\vert_L}=0\bigr\}$ in $\Ker{\cal L}$. Since
$\Ker{\cal L}$ contains the constants, this subspace has codimension
1 in~$\Ker{\cal L}$.

We now for the first time use the assumption that $L$ is Hamiltonian
rigid. By definition, equality holds in \eq{hs2eq9}, so Lemma
\ref{hs2lem1} implies that $\dim\Ker{\cal L}=n^2+2n+1-\dim G$.
Therefore
\begin{equation}
\begin{split}
\ts\dim\bigl\{f\in\Ker{\cal L}:\int_Lf\,\d
V_{g_0\vert_L}=0\bigr\}&=n^2+2n
-\dim G\\
&=\dim(\u(n)\op\C^n)-\dim\g=\dim \g^\ci.
\end{split}
\label{hs6eq3}
\end{equation}

We will construct an isomorphism $\psi^t:\bigl\{f\in\Ker{\cal
L}:\int_Lf\,\d V_{g_0\vert_L}=0\bigr\}\ra\g^\ci$, which explains
\eq{hs6eq3}. Define a linear map
$\xi^t:\u(n)\op\C^n\ra\bigl\{f\in\Ker{\cal L}:\int_Lf\,\d
V_{g_0\vert_L}=0\bigr\}$ by $\xi^t:x\mapsto\mu_x\ci t\vert_L$, where
$t$ acts by dilations on $\C^n$, and $\mu_x:\C^n\ra\R$ is the unique
moment map for the vector field $v_x$ associated with $x\in
\u(n)\op\C^n$ with $\int_L(\mu_x\ci t)\,\d V_{g_0\vert_L}=0$. Since
$G\subset\U(n)$, it commutes with the dilation $t$, and as $G$ is
the subgroup of $\U(n)\lt\C^n$ fixing $L$, it is also the subgroup
of $\U(n)\lt\C^n$ fixing $tL$. Hence $\Ker\xi^t$ is the Lie algebra
$\g$ of $G$. As $L$ is Hamiltonian rigid, $\xi^t$ is surjective.
Using the $L^2$-inner product to identify $\bigl\{f\in\Ker{\cal
L}:\int_Lf\,\d V_{g_0\vert_L}=0\bigr\}$ with its dual, define
$\psi^t:\bigl\{f\in\Ker{\cal L}:\int_Lf\,\d
V_{g_0\vert_L}=0\bigr\}\ra (\u(n)\op\C^n)^*$ to be the dual map of
$\xi^t$. Then $\psi^t$ maps to $\g^\ci$, as $\Ker\xi^t=\g$, and is
injective, as $\xi^t$ is surjective. Equation \eq{hs6eq3} thus
implies that $\psi^t$ is an isomorphism.

\begin{prop} Regard\/ $\d K^t$ as a smooth function $U\ra {\mathfrak
g}^\ci,$ and\/ $H^t$ as a smooth function $U\ra \bigl\{f\in\Ker{\cal
L}:\int_Lf\,\d V_{g_0\vert_L}=0\bigr\},$ as above. Then for small\/
$t$ and all\/ $(p,\up)\in U$ there is an isomorphism\/
$\Psi^t_{p,\up}:\bigl\{f\in\Ker{\cal L}:\int_Lf\,\d
V_{g_0\vert_L}=0\bigr\}\ra \g^\ci$ such that\/ $\d
K^t\vert_{(p,\up)}=\Psi^t_{p,\up}\ci H^t(p,\up)$. Furthermore
$\Psi^t_{p,\up}$ approximates $\psi^t:\bigl\{f\in\Ker{\cal
L}:\int_Lf\,\d V_{g_0\vert_L}=0\bigr\}\ra {\mathfrak g}^\ci$ above,
and depends smoothly on~$p,\up$.
\label{hs6prop}
\end{prop}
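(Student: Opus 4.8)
The plan is to compute $\d K^t$ directly from the first variation of volume and match it with the pairing defining $\psi^t$. Throughout I work in the rescaled picture of \S\ref{hs4}, in which $K^t(p,\up)=F^t_{p,\up}(f^t_{p,\up})$ and $P^t_{p,\up}$ is the Euler--Lagrange operator of $F^t_{p,\up}$, so that $\d F^t_{p,\up}\vert_f(h)=\an{P^t_{p,\up}(f),h}_{L^2}$ with respect to the $L^2$-product of $g_0\vert_L$. Recall also that $\psi^t(\phi)(x)=\an{\phi,\xi^t(x)}_{L^2}$, since $\psi^t$ is the $L^2$-dual of $\xi^t:x\mapsto\mu_x\ci t\vert_L$.

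First I would dispose of the dependence of $f^t_{p,\up}$ on $(p,\up)$. Differentiating $K^t(p,\up)=F^t_{p,\up}(f^t_{p,\up})$ by the chain rule along a direction $x\in T_{(p,\up)}U$ produces one term from varying the parameters of $F^t_{p,\up}$ with $f$ frozen, and one term $\an{P^t_{p,\up}(f^t_{p,\up}),\dot f}_{L^2}=\an{H^t(p,\up),\dot f}_{L^2}$, where $\dot f$ is the derivative of $f^t_{p,\up}$ in the direction $x$. Since $\Ker{\cal L}$ is a fixed subspace and $f^t_{p,\up}\perp\Ker{\cal L}$ for every $(p,\up)$ by Theorem \ref{hs5thm}, its derivative $\dot f$ is again orthogonal to $\Ker{\cal L}$; as $H^t(p,\up)\in\Ker{\cal L}$, this term vanishes. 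Hence $\d K^t\vert_{(p,\up)}$ equals the partial derivative of $F^t_{p,\up}(f)$ in $(p,\up)$, evaluated at $f=f^t_{p,\up}$.

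The heart of the argument is to interpret this partial derivative geometrically. A tangent vector $x\in\u(n)\op\C^n\cong T_{(p,\up)}U$ moves the frame and so deforms $L^{t,f}_{p,\up}$ through Lagrangians, since $\Up_{p,\up}^*(\om)=\om_0$ exactly by Proposition \ref{hs3prop1}(iii). For the vertical directions $A\in\u(n)$ this deformation is exactly the action of $\exp(sA)$ by Proposition \ref{hs3prop1}(ii), which is the Hamiltonian flow of the moment map $\mu_A$; for the horizontal directions $w\in\C^n$ the same holds to leading order, the deformation being generated by the translation moment map $\mu_w$, with corrections arising from the curvature of $g$ and the deviation of $\Up_{p,\up}$ from a linear map. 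In either case the deformation is Hamiltonian, and its generating function, restricted to the Lagrangian and pulled back to $L$ via the dilation, is $\xi^t(x)=\mu_x\ci t\vert_L$ up to these corrections and up to terms of order $f^t_{p,\up}$. Feeding this into the first variation formula and using that $P^t_{p,\up}$ is the Euler--Lagrange operator yields $\d K^t\vert_{(p,\up)}(x)=\an{H^t(p,\up),\xi^t(x)}_{L^2}+(\text{corrections})=\psi^t(H^t(p,\up))(x)+(\text{corrections})$. Reading off the linear map carrying $H^t(p,\up)$ to $\d K^t\vert_{(p,\up)}$ defines $\Psi^t_{p,\up}$ and gives the identity $\d K^t\vert_{(p,\up)}=\Psi^t_{p,\up}\ci H^t(p,\up)$.

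It remains to show that $\Psi^t_{p,\up}$ approximates $\psi^t$ and is an isomorphism. The corrections isolated above involve only the difference between $g^t_{p,\up}$ and $g_0$ together with finitely many of its derivatives, and the size of $f^t_{p,\up}$; by Propositions \ref{hs3prop2} and \ref{hs4prop} and the smallness of $f^t_{p,\up}$ they are small uniformly in $(p,\up)$ for small $t$. Thus $\Psi^t_{p,\up}$ approximates $\psi^t$ uniformly in $(p,\up)$, and since $\psi^t$ is an isomorphism by Hamiltonian rigidity and the dimension count \eq{hs6eq3}, so is $\Psi^t_{p,\up}$ for all small $t$; smoothness of $\Psi^t_{p,\up}$ in $(p,\up)$ follows from the smooth dependence of $\Up_{p,\up}$, $g^t_{p,\up}$ and $f^t_{p,\up}$ on $(p,\up)$. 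The main obstacle is the geometric identification in the previous paragraph: making precise, for the horizontal directions $w\in\C^n$ where Proposition \ref{hs3prop1}(ii) gives no exact equivariance, that moving the frame induces a Hamiltonian deformation whose generating function is $\xi^t(x)$ up to errors controlled by Propositions \ref{hs3prop2} and \ref{hs4prop}, so that the isomorphism property of $\psi^t$ is inherited by $\Psi^t_{p,\up}$.
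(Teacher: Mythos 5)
Your proposal is correct and follows essentially the same route as the paper: differentiate $K^t$ via the Euler--Lagrange identity, kill the contribution of $\pd_x f^t_{p,\up}$ using $f^t_{p,\up}\perp\Ker{\cal L}$ and $H^t(p,\up)\in\Ker{\cal L}$, identify the frame-motion as a Hamiltonian deformation (exact for vertical directions by equivariance, and Hamiltonian for horizontal directions because $\Up_{p,\up}^*(\om)=\om_0$ on the simply-connected $B_\ep$) with generating function approximating $\xi^t(x)$, and deduce $\Psi^t_{p,\up}\approx\psi^t$ so that the isomorphism property is inherited. The only organizational difference is that you apply the chain rule and discard the $\pd_x f^t_{p,\up}$ term at the outset, whereas the paper packages the full variation into a single generating function $h^t_{p,\up}(x)\approx t^2\pd_x[f^t_{p,\up}]+\xi^t(x)$ and discards that term only when pairing against $\Ker{\cal L}$; the step you flag as needing precision is handled in the paper at the same informal level, via the function $N^x_{p,\up}$ with $N^x_{p,\up}\approx\mu_x$ near $0$.
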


\begin{proof} For all $(p,\up)\in U$, define $\io_{p,\up}^t:L\ra M$
by $\io_{p,\up}^t(q)=\Up_{p,\up}\ci t\ci\Phi(q,\d f^t_{p,\up}
\vert_q)$. Then $\io_{p,\up}^t$ is a Lagrangian embedding with
$\io_{p,\up}^t(L)=L_{p,\up}^t$. By construction, with $t$ fixed, the
family of Lagrangians $\io_{p,\up}^t(L)$ for $(p,\up)$ are all
Hamiltonian equivalent. Let $(p,\up)\in U$ and $x\in T_{(p,\up)}U$.
Consider the derivative of the family of maps $\io_{p',\up'}^t:L\ra
M$ for $(p',\up')\in U$ in direction $x$ at $(p',\up')=(p,\up)$ in
$U$. This gives $\pd_x\io_{p,\up}^t\in
C^\iy\bigl((\io_{p,\up}^t)^*(TM)\bigr)$, that is,
$\pd_x\io_{p,\up}^t$ is a section of the vector bundle
$(\io_{p,\up}^t)^*(TM)\ra L$.

Since the family $\io_{p,\up}^t:L\ra M$ for $(p,\up)\in U$ are
Hamiltonian equivalent Lagrangian embeddings, $\pd_x\io_{p,\up}^t$
is a Hamiltonian variation of $\io_{p,\up}^t(L)$. Hence
$(\pd_x\io_{p,\up}^t\cdot\om)\vert_{\io_{p,\up}^t(L)}$ is an exact
1-form on $\io_{p,\up}^t(L)$, and $(\io_{p,\up}^t)^*(\pd_x
\io_{p,\up}^t\cdot\om)$ is an exact 1-form on $L$. Since $L$ is
connected, as in \S\ref{hs23}, there is a unique function
$h_{p,\up}^t(x)\in C^\iy(L)$ with $\int_Lh_{p,\up}^t(x)\,\d
V_{g_0\vert_L}=0$ such that
$(\io_{p,\up}^t)^*(\pd_x\io_{p,\up}^t\cdot\om)=\d (h_{p,\up}^t(x))$.
This $h_{p,\up}^t(x)$ depends linearly on $x\in T_{(p,\up)}U$, so we
have defined a linear map~$h_{p,\up}^t: T_{(p,\up)}U\ra C^\iy(L)$.

By construction, if $\ga\in G$ then $\io_{p,\up\ci\ga}^t(L)=
\io_{p,\up}^t(L)$ as submanifolds of $M$, although the actual
parametrizations $\io_{p,\up\ci\ga}^t,\io_{p,\up}^t$ may differ. It
follows under the identification $T_{(p,\up)}U\cong\u(n)\op\C^n$, if
$x$ lies in the Lie subalgebra $\g$ of $\u(n)\op\C^n$, then
$h_{p,\up}^t(x)\equiv 0$, since $\pd_x\io_{p,\up}^t$ is an
infinitesimal reparametrization of a fixed Lagrangian
$\io_{p,\up}^t(L)$ in~$M$.

We have
\begin{equation}
\begin{split}
(\d K^t\vert_{(p,\up)})\cdot x&=\pd_x\bigl[ t^{-n}\Vol_g\bigl(
\Up_{p,\up}\ci t\ci\Phi (\Ga_{\d f^t_{p,\up}})\bigr)\bigr]\\
&=\ban{h_{p,\up}^t(x), P^t_{p,\up}(f^t_{p,\up})}_{L^2},
\end{split}
\label{hs6eq4}
\end{equation}
by definition of $K^t$, and using the fact that $P^t_{p,\up}(f)$ is
the Euler--Lagrange variation of $f\mapsto
t^{-n}\Vol_g\bigl(\Up_{p,\up}\ci t\ci\Phi(\Ga_{\d
f^t_{p,\up}})\bigr)$, so that for $h\in C^\iy(L)$ we have
\begin{equation*}
\ts\frac{\d}{\d s}\bigl[t^{-n}\Vol_g\bigl(\Up_{p,\up}\ci
t\ci\Phi(\Ga_{\d (f+sh)})\bigr)\bigr]\big\vert_{s=0}
=\ban{h,P^t_{p,\up}(f)}{}_{L^2}.
\end{equation*}

Define a linear map $\Psi^t_{p,\up}:\bigl\{f\in\Ker{\cal
L}:\int_Lf\,\d V_{g_0\vert_L}=0\bigr\}\ra\bigl(\u(n)\op\C^n\bigr){}^*$ by
\begin{equation}
\Psi^t_{p,\up}(f):x\longmapsto \ban{h_{p,\up}^t(x),f}{}_{L^2}
\label{hs6eq5}
\end{equation}
for $f\in\Ker{\cal L}$ with $\int_Lf\,\d V_{g_0\vert_L}=0$ and
$x\in\u(n)\op\C^n$, using the identification
$T_{(p,\up)}U\cong\u(n)\op\C^n$. From above, if $x\in\g$ then
$h_{p,\up}^t(x)\equiv 0$, so $\Psi^t_{p,\up}(f)\vert_{\mathfrak
g}=0$, and $\Psi^t_{p,\up}(f)\in\g^\ci$. Thus $\Psi^t_{p,\up}$ is a
linear map $\bigl\{f\in\Ker{\cal L}:\int_Lf\,\d
V_{g_0\vert_L}=0\bigr\}\ra\g^\ci$. Equations
\eq{hs6eq4}--\eq{hs6eq5} and $H^t(p,\up)=P^t_{p,\up}(f^t_{p,\up})$
imply that $\d K^t\vert_{(p,\up)}=\Psi^t_{p,\up}\ci H^t(p,\up)$, as
we want. Clearly $\Psi^t_{p,\up}$ depends smoothly on~$p,\up$.

It remains to show that $\Psi^t_{p,\up}$ is an isomorphism, with
$\Psi^t_{p,\up}\approx\psi^t$. We claim that for small $t$ and all
$(p,\up)\in U$ we have
\begin{equation}
h_{p,\up}^t(x) \approx t^2\cdot\pd_x\bigl[f^t_{p,\up}\bigr]+\xi^t(x)
\label{hs6eq6}
\end{equation}
in $C^\iy(L)$, where $\pd_x\bigl[f^t_{p,\up}\bigr]$ is the
derivative of the function $(p,\up)\mapsto f^t_{p,\up}$ in direction
$x\in T_{(p,\up)}U$ at $(p,\up)\in U$. To see this, note that
$h_{p,\up}^t(x)$ measures the variation of the family of Hamiltonian
equivalent Lagrangians $(p,\up)\mapsto\Up_{p,\up}\ci t\ci\Phi
(\Ga_{\d f^t_{p,\up}})$ in direction $x$ in $T_{(p,\up)}U$. By the
chain and product rules, we can write this variation as the sum of
two contributions: (a) that due to varying $f^t_{p,\up}$ as a
function of $(p,\up)$ in direction $x$, with $\Up_{p,\up}$ fixed;
and (b) that due to varying $\Up_{p,\up}$ as a function of $(p,\up)$
in direction $x$, with $f^t_{p,\up}$ fixed.

The contributions of type (a) are
$t^2\cdot\pd_x\bigl[f^t_{p,\up}\bigr]$. This is because if we fix
$\Up_{p,\up}$ and vary $f^t_{p,\up}$ in direction $x$ in the
Lagrangian $\Up_{p,\up}\ci t\ci\Phi(\Ga_{\d f^t_{p,\up}})$, then
$\Ga_{\d f^t_{p,\up}}$ in $(T,\hat\om)$ changes by a Hamiltonian
variation from the function $\pd_x\bigl[f^t_{p,\up}\bigr]$; so
$\Phi(\Ga_{\d f^t_{p,\up}})$ in $(B_R,\om_0)$ changes by a
Hamiltonian variation from $\pd_x\bigl[f^t_{p,\up}\bigr]$, as
$\Phi^*(\om_0)=\hat\om$; so $t\ci\Phi (\Ga_{\d f^t_{p,\up}})$ in
$(B_{tR},\om_0)$ changes by a Hamiltonian variation from $t^2\cdot
\pd_x\bigl[f^t_{p,\up}\bigr]$, since $t^*(\om_0)=t^2\cdot\om_0$; so
$\Up_{p,\up}\ci t\ci\Phi (\Ga_{\d f^t_{p,\up}})$ in $(M,\om)$
changes by a Hamiltonian variation from $t^2\cdot
\pd_x\bigl[f^t_{p,\up}\bigr]$, as~$\Up_{p,\up}^*(\om)=\om_0$.

To understand the contributions of type (b), consider the smooth
family of embeddings $\Up_{p,\up}:B_\ep\ra M$ for $(p,\up)\in U$.
The derivative $\pd_x\Up_{p,\up}$ in direction $x\in T_{(p,\up)}U$
at $(p,\up)$ in $U$ is a section of the vector bundle $\Up_{p,\up}^*
(TM)\ra B_\ep$. But $\d\Up_{p,\up}:TB_\ep\ra\Up_{p,\up}^*(TM)$ is an
isomorphism, so $(\d\Up_{p,\up})^{-1}(\pd_x\Up_{p,\up})$ is a vector
field on $B_\ep$. Since $\Up_{p,\up}^*(\om)\equiv\om_0$ for
$(p,\up)\in U$, and $B_\ep$ is simply-connected, this is a
Hamiltonian vector field on $(B_\ep,\om_0)$, so there exists a
smooth function $N_{p,\up}^x:B_\ep\ra\R$, unique up to addition of
constants, such that $(\d\Up_{p,\up})^{-1}(\pd_x\Up_{p,\up})
\cdot\om_0\equiv \d N_{p,\up}^x$ in 1-forms on $B_\ep$. Following
the definitions through shows that the contributions of type (b),
from varying $\Up_{p,\up}$ with $f^t_{p,\up}$ fixed, are
\begin{equation}
q\longmapsto N_{p,\up}^x\ci t\ci\Phi(q,\d f^t_{p,\up} \vert_q)+c,
\label{hs6eq7}
\end{equation}
for $q\in L$, where $c\in\R$ is such that \eq{hs6eq7} integrates to
zero over~$L$.

Now Proposition \ref{hs3prop1}(i) says that $\Up_{p,\up}(0)=p$ and
$\d\Up_{p,\up}\vert_0=\up$. It follows that near 0 in $B_\ep$, the
vector field $(\d\Up_{p,\up})^{-1}(\pd_x\Up_{p,\up})$ on $B_\ep$
approximates the $\u(n)\op\C^n$ vector field $v_x$ on $\C^n$
corresponding to $x$ under the identification
$T_{(p,\up)}U\cong\u(n)\op\C^n$, and thus $N_{p,\up}^x\approx\mu_x$
near 0 in $\C^n$, where $\mu_x:\C^n\ra\R$ is a moment map for $v_x$,
and is unique up to the addition of constants. Since $t\ci\Phi(q,\d
f^t_{p,\up}\vert_q)$ lies in $B_{tR}$, for small $t$ we see that
\eq{hs6eq7} approximates $q\mapsto\mu_x\ci t\ci\Phi(q,\d
f^t_{p,\up}\vert_q)+c$. Also, the proof of Theorem \ref{hs5thm}
shows that when $t$ is small $\nm{f^t_{p,\up}}_{C^{4,\ga}}$ is
small, so we can approximate $f^t_{p,\up}$ by zero, and \eq{hs6eq7}
approximates $\mu_x\ci t\ci\id_L$. The definition of $\xi^t$ above
now implies that \eq{hs6eq7} approximates $\xi^t(x)$, which
proves~\eq{hs6eq6}.

Since $f^t_{p,\up}\perp\Ker{\cal L}$ for all $(p,\up)\in U$, it
follows that $\pd_x\bigl[f^t_{p,\up}\bigr]\perp\Ker{\cal L}$. Thus,
substituting \eq{hs6eq6} into \eq{hs6eq5} and noting that
$f\in\Ker{\cal L}$ shows that
\begin{equation*}
\Psi^t_{p,\up}(f)(x)\approx \ban{\xi^t(x),f}{}_{L^2}
\end{equation*}
for small $t$, for all $f\in\Ker{\cal L}$ with $\int_Lf\,\d
V_{g_0\vert_L}=0$ and $x\in\u(n)\op\C^n$. Since $\psi^t$ is the dual
map to $\xi^t$, using the $L^2$ inner product to identify
$\bigl\{f\in\Ker{\cal L}:\int_Lf\,\d V_{g_0\vert_L}=0\bigr\}$ with
its dual, it follows that $\Psi^t_{p,\up}\approx\psi^t$ for small
$t$. But $\psi^t$ is an isomorphism, which is an open condition, so
$\Psi^t_{p,\up}$ is also an isomorphism for small $t$. By
compactness of $U$, for small enough $t$ these hold uniformly for
all~$(p,\up)\in U$.
\end{proof}

We can now complete the proof of Theorem A. As $\Psi^t_{p,\up}$ is
an isomorphism, $\d K^t\vert_{(p,\up)}=\Psi^t_{p,\up}\ci H^t(p,\up)$
implies that $\d K^t\vert_{(p,\up)}=0$ if and only if
$P^t_{p,\up}(f^t_{p,\up})=H^t(p,\up)=0$. But $L_{p,\up}^t$ is a
Hamiltonian stationary Lagrangian in $(M,\om)$ if and only if
$P^t_{p,\up}(f^t_{p,\up})=0$. Hence for small $t$, $L_{p,\up}^t$ is
a Hamiltonian stationary Lagrangian if and only if $(p,\up)$ is a
stationary point of $K^t:U\ra\R$. But $U$ is a compact manifold
without boundary and $K^t$ is a smooth function, so $K^t$ must have
at least one stationary point $(p,\up)$ in $U$. Then
$L'=L_{p,\up}^t$ satisfies the first part of Theorem~A.

For the second part, suppose also that $L$ is Hamiltonian stable.
Take $(p,\up)$ to be a local minimum of $K^t$ on $U$, which must
exist as $U$ is compact. Then $L'=L_{p,\up}^t$ is Hamiltonian
stationary, as above. We claim that $L'$ is also Hamiltonian stable.
To see this, for $f\in C^\iy(L)$ and small $s\in\R$ write
\begin{equation*}
\Vol_g\bigl(\Up_{p,\up}\!\ci\!t\!\ci\!\Phi(\Ga_{\d f^t_{p,\up}+s\d
f})\bigr)\!=\!\Vol_g\bigl(\Up_{p,\up}\!\ci\!t\!\ci\!\Phi(\Ga_{\d
f^t_{p,\up}})\bigr)\!+\!s^2Q^t_{p,\up}(f)\!+\!O(\md{s}^3),
\end{equation*}
where the homogeneous quadratic form $Q^t_{p,\up}:C^\iy(L)\ra\R$ is
the second variation of $\Vol_g$ at $L'$. We must show that $Q(f)\ge
0$ for all $f\in C^\iy(L)$.

Divide Hamiltonian variations of $L'$ into two kinds: (i) those
coming from functions $f\in(\Ker{\cal L})^\perp$, that is, $f\in
C^\iy(L)$ with $f\perp\Ker{\cal L}$; and (ii) those coming from the
family of Lagrangians $L_{p',\up'}^t$ in $(M,\om)$, for $(p',\up')$
in $U$ close to $(p,\up)$. The vector space of functions $f\in
C^\iy(L)$ corresponding to Hamiltonian variations of type (ii) turns
out to be $W^t_{p,\up}=\bigl\{h_{p,\up}^t(x):x\in
T_{(p,\up)}U\bigr\}\op\an{1}$, for $h_{p,\up}^t$ as in the proof of
Proposition \ref{hs6prop}. We have $C^\iy(L)=(\Ker{\cal L})^\perp\op
W^t_{p,\up}$.

For Hamiltonian variations of type (i), the second variation of the
volume functional $\Vol_g$ at $L_{p,\up}^t$ approximates the second
variation of the volume functional $\Vol_{g_0}$ at $tL$ in $\C^n$ on
functions $f\in (\Ker{\cal L})^\perp$, or equivalently, the second
variation of $t^n\cdot\Vol_{g_0}$ at $L$ in $\C^n$ on functions
$f\in (\Ker{\cal L})^\perp$. Using this we can show that
\begin{equation}
Q^t_{p,\up}(f)=t^n\an{f,{\cal L}f}_{L^2} +O\bigl(t^{\ga}\nm{f}_{
L^2_2}^2\bigr)\quad\text{for all $f\in(\Ker{\cal L})^\perp$,}
\label{hs6eq8}
\end{equation}
for some $\ga>n$. As the second variation of $\Vol_{g_0}$ at $L$ is
nonnegative, $\cal L$ is a nonnegative fourth-order linear elliptic
operator on a compact manifold $L$. Using this we can show that
there exists $\la>0$ such that $\an{f,{\cal L}f}_{L^2}\ge
\la\nm{f}_{L^2_2}^2$ for all $f\in(\Ker{\cal L})^\perp$. Thus the
term $t^n\an{f,{\cal L}f}_{L^2}$ in \eq{hs6eq8} dominates the term
$O\bigl(t^{\ga}\nm{f}_{L^2_2}^2\bigr)$ for small $t$, as $\ga>n$.
Hence the second variation of $\Vol_g$ on variations of type (i) is
positive definite, for small~$t$.

For variations of type (ii), the second variation of $\Vol_g$ is the
second variation of the function $(p',\up')\mapsto
\Vol_g(L_{p',\up'}^t)=t^n\cdot K^t(p',\up')$ at $(p,\up)$. But
$(p,\up)$ is a local minimum of $K^t$ at $(p,\up)$, so this second
variation is nonnegative. Therefore $Q^t_{p,\up}(f)\ge 0$ for
all~$f\in W^t_{p,\up}$.

A general variation $f$ may be written uniquely as $f=f_1+f_2$ for
$f_1\in(\Ker{\cal L})^\perp$ of type (i) and $f_2\in W^t_{p,\up}$ of
type (ii). We claim that $Q^t_{p,\up}(f)=Q^t_{p,\up}(f_1)
+Q^t_{p,\up}(f_2)$, that is, the bilinear terms in $f_1\ot f_2$ in
$Q^t_{p,\up}(f_1+f_2)$ are zero. This holds because the definition
of the family of Lagrangians $L_{p',\up'}^t$ with
$P^t_{p',\up'}(f^t_{p',\up'})\in\Ker{\cal L}$ means that the volume
of $L_{p',\up'}^t$ is stationary under Hamiltonian variations coming
from $f_1\in(\Ker{\cal L})^\perp$ not just at the single point
$(p,\up)$, but for all $(p',\up')\in U$. That is, if
$f_1\in(\Ker{\cal L})^\perp$ then
\begin{equation*}
\ts\frac{\d}{\d s}\bigl[
\Vol_g\bigl(\Up_{p',\up'}\!\ci\!t\!\ci\!\Phi(\Ga_{\d
f^t_{p',\up'}+s\d f_1})\bigr)\bigr]\big\vert_{s=0}=0.
\end{equation*}
Differentiating this identity at $(p',\up')=(p,\up)$ in the
direction in $T_{(p,\up)}U$ induced by the Hamiltonian variation
$f_2\in W^t_{p,\up}$ implies that the $f_1\ot f_2$ term in
$Q^t_{p,\up}(f_1+f_2)$ is zero. Therefore $Q^t_{p,\up}(f)=
Q^t_{p,\up}(f_1)+Q^t_{p,\up}(f_2)\ge 0$, since $Q^t_{p,\up}(f_1)\ge
0$ and $Q^t_{p,\up}(f_2)\ge 0$. So $L'$ is Hamiltonian stable. This
completes the proof.

\section{Conclusions}
\label{hs7}

We finish with a question about the family of Hamiltonian stationary
Lagrangians $L'$ in a fixed Hamiltonian isotopy class $\cal HI$ in a
compact symplectic manifold $(M,\om)$. Although in
\S\ref{hs4}--\S\ref{hs6} we dealt with a family of Lagrangians
$L_{p,\up}^t$ parametrized by $(p,\up)\in U$, as the whole
construction is $G$-equivariant we can think of this family of
Lagrangians as parametrized by $(p,\up)G$ in $U/G$. The
$G$-invariant function $K^t:U\ra\R$ descends to $K^t_G:U/G\ra\R$. In
\S\ref{hs6} we proved that $L_{p,\up}^t$ is a Hamiltonian stationary
Lagrangian if and only if $(p,\up)G$ is a stationary point of
$K^t_G$ on the compact manifold~$U/G$.

Suppose the only Hamiltonian stationary Lagrangians $L'$ in this
Hamiltonian isotopy class $\cal HI$ are of the form $L_{p,\up}^t$.
Then we would have identified the family of Hamiltonian stationary
Lagrangians $L'$ in $\cal HI$, which is the critical locus of a real
function on an infinite-dimensional, noncompact manifold $\cal HI$,
with the critical locus of a real function on a finite-dimensional,
compact manifold~$U/G$.

This is suggestive. There are several important areas in geometry,
dealing either with counting invariants such as Donaldson,
Gromov--Witten, or Donaldson--Thomas invariants, or with Floer
homology theories, for which the original motivation comes from
considering some infinite-dimensional, noncompact moduli space $\cal
M$ of connections or submanifolds, and then treating $\cal M$ as if
it were a finite-dimensional compact manifold.

If $Y$ is a compact manifold and $f:Y\ra\R$ is a Morse function,
then the number of critical points of $f$, counted with signs, is
$\chi(Y)$, and using the gradient flow lines of $f$ between critical
points one can construct the (Morse) homology $H_*(Y;\R)$. The
invariants and homology theories mentioned above work by counting
critical points or gradient flow lines of a functional $F:{\cal
M}\ra\R$ on an infinite-dimensional, noncompact manifold $\cal M$;
the answers turn out to be independent of most of the geometric
choices in the definition of $F$, even though $\cal M$ is neither
finite-dimensional nor compact. This motivates the following:

\begin{quest} Let\/ $(M,\om)$ be a compact symplectic manifold, $g$ a
Riemannian metric on\/ $M$ compatible with\/ $\om,$ and\/ ${\cal
HI}$ a Hamiltonian isotopy class of compact Lagrangians\/ $L$ in\/
$(M,\om)$. Write\/ $\Vol_g:{\cal HI}\ra\R$ for the volume
functional.

Can one define some invariant\/ $I({\cal HI})\in\Z$ which `counts'
(with multiplicity and sign) Hamiltonian stationary Lagrangians\/
$L$ in ${\cal HI},$ that is, stationary points of\/ $\Vol_g,$ and
gives an answer independent of the choice of\/~$g$?

Can one define some kind of Floer homology theory\/ $HF_*({\cal
HI})$ by studying the gradient flow of\/ $\Vol_g$ between critical
points, whose Euler characteristic is\/ $I({\cal HI}),$ and which is
independent of\/ $g$ up to canonical isomorphism?
\label{hs7quest}
\end{quest}

In the case of Theorem A, since the family of Hamiltonian stationary
Lagrangians $L'$ we have constructed corresponds to the critical
points of a function $K^t:U/G\ra\R$, we would expect the answers
$I({\cal HI})=\chi(U/G)$ and $HF_*({\cal HI})\cong H_*(U/G;\R)$.
Since $U/G$ is a fibre bundle over $M$ with fibre $\U(n)/G$ we have
$\chi(U/G)=\chi\bigl(\U(n)/G\bigr)\chi(M)$. If $L$ is
$T^n_{a_1,\ldots,a_n}$ in \eq{hs2eq11} with $a_1,\ldots,a_n>0$
distinct, then $G$ is the maximal torus $T^n$ in $\U(n)$, and
$\chi\bigl(\U(n)/G\bigr)=n!$, so we expect~$I({\cal
HI})=n!\,\chi(M)$.
\bigskip

{\parindent 0pt\parskip 3pt
{\scshape The Mathematical Institute, 24-29 St.~Giles,
Oxford, OX1 3LB, U.K.}

{\it E-mail:} {\tt joyce@maths.ox.ac.uk}
\medskip

{\scshape Department of Mathematics and Taida Institute
of Mathematical Sciences, National Taiwan University, Taipei 10617,
Taiwan}

{\it E-mail:} {\tt yilee@math.ntu.edu.tw}
\medskip

{\scshape Department of Mathematics, Stanford University,
Stanford, CA 94305-2125, U.S.A.}

{\it E-mail:} {\tt schoen@math.stanford.edu}}

\end{document}